\newtheorem{theorem}{Theorem}[section]
\newtheorem{lemma}[theorem]{Lemma}
\newtheorem{cor}[theorem]{Corollary}
\newtheorem{question}[theorem]{Question}
\theoremstyle{definition}
\newtheorem{definition}[theorem]{Definition}
\newtheorem{remark}[theorem]{Remark}
\def \dom{\operatorname{dom}}
\def\dotminussym#1#2{%
  \setbox0=\hbox{$\m@th#1-$}%
  \kern.5\wd0%
  \hbox to 0pt{\hss\hbox{$\m@th#1-$}\hss}%
  \raise.6\ht0\hbox to 0pt{\hss$\m@th#1.$\hss}%
  \kern.5\wd0}
\mathchardef\mhyphen="2D
\newcommand{\RCA}{\ensuremath{\mathbf{RCA_0}}}
\newcommand{\ACA}{\ensuremath{\mathbf{ACA_0}}}
\newcommand{\Pmax}{\ensuremath{\Pi^1_1\mhyphen\mathrm{MAX}}}
\begin{document}

\title{On Maximum Conservative Extensions}
\author{Henry Towsner}
\date{\today}
\thanks{Partially supported by NSF grant DMS-1157580.}
\address {Department of Mathematics, University of Pennsylvania, 209 South 33rd Street, Philadelphia, PA 19104-6395, USA}
\email{htowsner@math.upenn.edu}
\urladdr{www.math.upenn.edu/~htowsner}

\begin{abstract}
We investigate the set of $\Pi^1_2$ sentences which are $\Pi^1_1$ conservative over theories of second order arithmetic.  We exhibit new elements of these sets and conclude that the sets are $\Pi_2$ complete.  Along the way, we show that, over the theory \RCA{}, induction for $\Sigma_n$ formulas has essentially no consequences for $\Delta_{n+1}$ formulas.
\end{abstract}

\maketitle

\section{Introduction}

Many questions in reverse mathematics amount to asking about the implications among $\Pi^1_2$ sentences over a fixed background theory.  In particular, this includes the questions which most naturally line up with questions about computability: standard models of \RCA{} are essentially the Turing ideals, so asking whether one $\Pi^1_2$ sentence implies a second is closely related (but not identical!) to asking whether closure of a Turing ideal under solutions to one problem implies closure under solutions to a second.  The gap between the proof-theoretic approach embodied in reverse mathematics and the recursion theoretic approach in terms of Turing ideals is the given by the possibility of nonstandard models of arithmetic.  Therefore part of the investigation of a $\Pi^1_2$ sentence is the investigation of its purely first-order consequences.  One way to address this is to ask what $\Pi^1_1$ theory $T$ is needed so that the sentence is $\Pi^1_1$-conservative over $T$.

Yokoyama \cite{MR2798907}  and, independently, Ikeda and Tsuboi \cite{ikeda:conservativity}, showed that when $T$ is a $\Pi^1_2$ theory, there is a largest $\Pi^1_2$ theory $\Pmax(T)$ which is $\Pi^1_1$ conservative over $T$.  In the particular case where $T$ is \RCA, three (families of) members of $\Pmax(\RCA)$ are known: weak K\"onig's lemma (due to Harrington), a version of the Baire Category Theorem \cite{MR1233924}, and the existence of cohesive sets \cite{cholak:MR1825173}.  Yokoyama asked whether these statements provided axioms for $\Pmax(\RCA)$.

For extensions of Peano arithmetic, $\Pmax(T)$ is always $\Pi_2$ complete \cite{MR2014250}, but the proof is proof theoretic and does not apply to weaker theories.  In this paper we exhibit new elements of the theories $\Pmax(\RCA+I\Sigma_n)$ which suffice to show that these theories are $\Pi_2$ complete.  In particular, this means the theories are not given by a finite number of axiom schemes.

Our main tool is showing that given an arbitrary model of $\RCA+I\Sigma_n$ and an arbitrary set $X$ of elements, we may add sets to the model to obtain a model of $\RCA+I\Sigma_n$ in which $X$ is encoded by a $\Delta_{n+1}$ formula.  This shows that $\RCA+I\Sigma_n$ places essentially no restraint on $\Delta_{n+1}$ formulas, a result which may be of independent interest.

The author is grateful to Peter Cholak and Keita Yokoyama for helpful discussions on this topic, and to Fran{\c{c}}ois Dorai for pointing out a different perspective on this paper in his blog post \cite{dorais_blog}.

\section{Definitions}

Throughout this paper, we consider theories in the language $\mathcal{L}^2$ of second-order arithmetic; all theories we consider will extend the standard base theory {\RCA} (see \cite{simpson99}).  All models will have the form $\mathfrak{M}=(M,\mathcal{M})$ where $M$ is a model of first-order arithmetic and $\mathcal{M}\subseteq\mathcal{P}(|M|)$.  (As the example suggests, we will write the Fraktur letter $\mathfrak{M}$ for the model, the Roman letter $M$ for the first-order part, and the calligraphic letter $\mathcal{M}$ for the second-order part.)  We write $|M|$ for the universe of $M$ and $||M||$ for the cardinality of $|M|$.

\begin{definition}
  We say $\mathfrak{M}$ is \emph{countable} if $||M||=|\mathcal{M}|=\aleph_0$.  We say $\mathfrak{M}$ is an \emph{$\omega$-submodel} of $\mathfrak{M}'$ if $M=M'$ and $\mathcal{M}\subseteq\mathcal{M}'$.
\end{definition}

We follow the convention of using lower case letters to refer to elements of $|M|$ or numeric variables, and upper case letters for elements of $\mathcal{M}$ or set variables.  In particular, when we write $\forall x\phi$, we mean $x$ is a numeric variable, while $\forall X\phi$ indicates that $X$ is a set variable.

\begin{definition}
  Let $T\subseteq T'$ be theories in $\mathcal{L}^2$.  We say $T'$ is \emph{$\Pi^1_1$ conservative} over $T$ if whenever $T'\vdash\sigma$ and $\sigma$ is a $\Pi^1_1$ sentence, already $T\vdash\sigma$.

We say a sentence $\sigma$ is \emph{true} if $(\mathbb{N},\mathcal{P}(\mathbb{N}))\vDash\sigma$.  A theory $T$ is true if every sentence in $T$ is true.
\end{definition}

The most common way to show that some theory is $\Pi^1_1$ conservative is to show that it has the $\omega$-extension property:
\begin{lemma}\label{ext_property}
  Suppose that every countable model of $T$ is an $\omega$-submodel of a model of $T'$.  Then $T'$ is $\Pi^1_1$ conservative over $T$.
\end{lemma}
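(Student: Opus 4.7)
The plan is to argue by contrapositive: I will show that if $T \not\vdash \sigma$ for some $\Pi^1_1$ sentence $\sigma$, then already $T' \not\vdash \sigma$. Write $\sigma = \forall X\,\phi(X)$ where $\phi$ is arithmetical (no set quantifiers). If $T \not\vdash \sigma$, then $T + \neg\sigma$ is consistent, so by the (downward) Löwenheim--Skolem theorem there is a countable model $\mathfrak{M} = (M, \mathcal{M})$ of $T$ in which $\neg\sigma$ holds. Unpacking, there exists some $X_0 \in \mathcal{M}$ with $\mathfrak{M} \models \neg\phi(X_0)$.

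Next I would invoke the hypothesis to obtain a model $\mathfrak{M}' = (M, \mathcal{M}')$ of $T'$ with $\mathcal{M} \subseteq \mathcal{M}'$. The crucial observation is that arithmetical formulas with set parameters are absolute between a model and its $\omega$-extensions: since $\phi(X_0)$ has only number quantifiers, and both models share the same first-order part $M$ and both contain $X_0$ as a parameter, the truth value of $\phi(X_0)$ is computed identically in $\mathfrak{M}$ and $\mathfrak{M}'$. Hence $\mathfrak{M}' \models \neg\phi(X_0)$, so $\mathfrak{M}' \models \exists X\,\neg\phi(X)$, i.e., $\mathfrak{M}' \models \neg\sigma$. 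Therefore $T' \not\vdash \sigma$, completing the contrapositive.

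There is no real obstacle here; the proof is essentially bookkeeping. The only point requiring any care is the absoluteness step, which uses that $\phi$ is arithmetical (this is why the lemma is specifically about $\Pi^1_1$ conservativity rather than, say, $\Pi^1_2$). I would also note briefly that restricting the hypothesis to countable models of $T$ is harmless precisely because Löwenheim--Skolem lets us reduce to that case when looking for a counterexample to $\sigma$.
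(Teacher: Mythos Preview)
Your argument is correct and is precisely the standard one; the paper in fact states this lemma without proof, treating it as folklore. The only points worth making explicit (and you essentially did) are that downward L\"owenheim--Skolem for the two-sorted language yields a model that is countable in the paper's sense (both $|M|$ and $\mathcal{M}$ countable, which is automatic once $T\supseteq\RCA$), and that arithmetical formulas with set parameters are absolute between a model and any $\omega$-extension because the first-order part is unchanged.
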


Cholak, Jockusch, and Slaman asked if, for $\Pi^1_2$ theories, this is the only way for a theory to be $\Pi^1_1$ conservative \cite{cholak:MR1825173}.  That is:
\begin{question}
  Suppose $T\subseteq T'$ are $\Pi^1_2$ theories extending \RCA, and that $T'$ is a $\Pi^1_1$ conservative extension of $T_0$.  Is it the case that every model of $T$ is an $\omega$-submodel of a model of $T'$?
\end{question}

Avigad showed that the answer is no \cite{avigad:collection}; this was extended by Yokoyama \cite{MR2798907}, who showed that the answer is still no even if we require that $T'$ be true.  We will provide some additional examples below.

However Yokoyama \cite{MR2798907} and Ikeda and Tsuboi \cite{ikeda:conservativity} showed that, nonetheless, two distinct $\Pi^1_2$, $\Pi^1_1$ conservative extensions of $T$ do have common models.  More precisely:
\begin{theorem}
  Let $T_0,T_1,T_2$ be $\Pi^1_2$ theories such that $T_1$ and $T_2$ are $\Pi^1_1$ conservative extensions of $T_0$.  Then $T_1+T_2$ is a $\Pi^1_1$ conservative extension of $T_0$.
\end{theorem}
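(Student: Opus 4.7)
The plan is to prove the contrapositive: given a $\Pi^1_1$ sentence $\sigma$ not provable in $T_0$, I construct a model of $T_1 + T_2 + \neg\sigma$. Writing $\sigma = \forall Z\,\theta(Z)$ with $\theta$ arithmetic, fix a countable $\mathfrak{M}_0 \vDash T_0$ together with some $X_0 \in \mathcal{M}_0$ satisfying $\mathfrak{M}_0 \vDash \neg\theta(X_0)$.

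The desired model is the union of a back-and-forth chain $\mathfrak{M}_0 \subseteq \mathfrak{M}_1 \subseteq \mathfrak{M}_2 \subseteq \cdots$ of countable models, arranged so that $\mathfrak{M}_{2i+1} \vDash T_1$ and $\mathfrak{M}_{2i+2} \vDash T_2$ for every $i \ge 0$. Each inclusion is taken in a strong sense: the first-order part of $\mathfrak{M}_{i+1}$ is an elementary extension of the first-order part of $\mathfrak{M}_i$ in the language of arithmetic augmented with unary predicate symbols for every set in $\mathcal{M}_i$, and $\mathcal{M}_i \subseteq \mathcal{M}_{i+1}$. Such extensions preserve arithmetic truth with parameters from earlier stages, so $\neg\theta(X_0)$ survives to the union $\mathfrak{M}^*$. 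Since $T_1$ and $T_2$ are $\Pi^1_2$, each axiom $\forall X\,\exists Y\,\psi(X,Y)$ is satisfied in $\mathfrak{M}^*$: any $X \in \mathcal{M}^*$ enters at some stage $i$, a later $\mathfrak{M}_j$ of the appropriate parity is a model of the corresponding $T_k$ and supplies a witness $Y \in \mathcal{M}_j$, and $\psi$ being arithmetic means this witness is preserved into $\mathfrak{M}^*$.

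The core technical point is the one-step extension lemma: given a countable $\mathfrak{N} \vDash T_0$ (later stages already satisfy $T_1$ or $T_2$, both of which extend $T_0$), and any $k \in \{1,2\}$, there is a countable $\mathfrak{N}' \vDash T_k$ extending $\mathfrak{N}$ in the above strong sense. To prove this, consider the theory consisting of $T_k$ together with the elementary diagram of $\mathfrak{N}$ in the first-order language augmented with a constant $\underline{n}$ for every $n \in |N|$ and a unary predicate symbol $\underline{A}$ for every $A \in \mathcal{N}$. Any model of this theory provides the required $\mathfrak{N}'$. If the theory is inconsistent, compactness isolates a finite fragment of the diagram that contradicts $T_k$; replacing the finitely many fresh constants and predicate symbols in this fragment by universally quantified numeric and set variables produces a $\Pi^1_1$ sentence provable in $T_k$ but false in $\mathfrak{N}$, contradicting $\Pi^1_1$-conservativity of $T_k$ over $T_0$.

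The main obstacle is verifying that the sentence manufactured in this last step really is $\Pi^1_1$. The elementary diagram contributes first-order arithmetic formulas in the fresh numeric constants, while the fresh predicate symbols occur only in atomic membership subformulas, so the matrix of the resulting sentence is arithmetic in the language augmented by set variables; universally quantifying the numeric constants preserves arithmeticity, and universally quantifying the finitely many predicate symbols introduces a single block of set quantifiers, yielding a genuine $\Pi^1_1$ sentence of $\mathcal{L}^2$. Once the extension lemma is established the back-and-forth proceeds routinely, and $\mathfrak{M}^*$ witnesses $T_1 + T_2 \nvdash \sigma$.
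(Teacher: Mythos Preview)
The paper does not actually prove this theorem; it is stated with attribution to Yokoyama \cite{MR2798907} and Ikeda--Tsuboi \cite{ikeda:conservativity}, and then used as background. So there is no in-paper proof to compare against.

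Your argument is the standard alternating-chain proof and is essentially the one in those references. It is correct. One terminological point: you should add \emph{set constants} $c_A$ (elements of the set sort) rather than unary predicate symbols. In Henkin semantics for $\mathcal{L}^2$ a fresh predicate symbol is interpreted as an arbitrary subset of $|N'|$, not necessarily a member of $\mathcal{N}'$, so from a model of $T_k$ plus the diagram you would not automatically get $\mathcal{M}_i\subseteq\mathcal{M}_{i+1}$. With set constants the interpretation is forced to lie in $\mathcal{N}'$, and the rest of your argument goes through unchanged: the diagram sentences are arithmetic in numeric and set constants, so replacing the finitely many constants by variables and universally quantifying yields a genuine $\Pi^1_1$ sentence of $\mathcal{L}^2$, and the conservativity hypothesis gives the contradiction. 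This is a wording fix, not a structural gap.
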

By standard compactness arguments, if $T_0\subseteq T_1\subseteq\cdots$ and each $T_i$ is $\Pi^1_1$ conservative over $T_0$, so is $\bigcup_i T_i$.

\begin{definition}
  $\Pmax(T)$ is the collection of $\Pi^1_2$ sentences $\sigma$ such that $T+\sigma$ is $\Pi^1_1$ conservative over $T$.
\end{definition}

\begin{cor}
  If $T$ is a $\Pi^1_2$ theory then $\Pmax(T)$ is $\Pi^1_1$ conservative over $T$.
\end{cor}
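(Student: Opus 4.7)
The plan is to reduce the corollary to the preceding theorem via compactness. Let $\sigma$ be any $\Pi^1_1$ sentence with $T+\Pmax(T)\vdash\sigma$. By compactness, there are finitely many $\sigma_1,\dots,\sigma_n\in\Pmax(T)$ with $T+\sigma_1+\cdots+\sigma_n\vdash\sigma$, so it suffices to establish, by induction on $n$, that each finite subtheory $T+\sigma_1+\cdots+\sigma_n$ is $\Pi^1_1$ conservative over $T$.

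The base case $n=1$ is immediate from the definition of $\Pmax(T)$. For the inductive step, assume $T_1:=T+\sigma_1+\cdots+\sigma_n$ is $\Pi^1_1$ conservative over $T_0:=T$, and set $T_2:=T+\sigma_{n+1}$, which is $\Pi^1_1$ conservative over $T_0$ by definition of $\Pmax(T)$. The crucial bookkeeping point is that all three theories $T_0,T_1,T_2$ are $\Pi^1_2$: this uses that $T$ is $\Pi^1_2$ by hypothesis and that each $\sigma_i\in\Pmax(T)$ is a $\Pi^1_2$ sentence. Hence the preceding theorem applies and yields that $T_1+T_2 = T+\sigma_1+\cdots+\sigma_{n+1}$ is $\Pi^1_1$ conservative over $T_0=T$, completing the induction.

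Putting the pieces together, from $T+\sigma_1+\cdots+\sigma_n\vdash\sigma$ we conclude $T\vdash\sigma$, so $\Pmax(T)$ is $\Pi^1_1$ conservative over $T$. (Alternatively, one can bypass the explicit induction by invoking the remark following the theorem, observing that $T+\Pmax(T)$ is the union of the directed family of finite extensions $T+\{\sigma_1,\dots,\sigma_n\}$, each of which is $\Pi^1_1$ conservative over $T$ by the two-theory case, and appealing to the compactness-based union principle.)

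There is no real obstacle here; the argument is essentially bookkeeping on top of the two-theory theorem. The only subtlety worth flagging is ensuring at each inductive step that the intermediate theory $T+\sigma_1+\cdots+\sigma_n$ remains $\Pi^1_2$, which is exactly what licenses reapplying the theorem, and which is why the corollary needs the hypothesis that $T$ itself be $\Pi^1_2$.
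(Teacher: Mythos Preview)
Your argument is correct and is exactly the approach the paper has in mind: the corollary is stated without proof, following immediately from the two-theory theorem together with the compactness remark just before it. Your induction simply makes explicit the passage from the two-theory case to finitely many $\Pi^1_2$ sentences, which is precisely what the paper leaves to the reader.
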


\section{Sets at Arm's Length}
We work with some fixed internal bijective pairing function, $(\cdot,\cdot):|M|^2\rightarrow|M|$.  For longer tuples we write $(\ldots,y,z)$ as an abbreviation for $((\ldots,y),z)$.  When functions are understood to take tuples as their input, we will write $f(x,y,\ldots,z)$ instead of $f((x,y,\ldots,z))$.

\begin{definition}
  If $S\in\mathcal{S}$, we say $S$ is \emph{convergent} if for each $i\in |M|$ there is an $m\in |M|$ such that for all $m'\geq m$, $(i,m')\in S$ iff $(i,m)\in S$.  If $S$ is convergent, we write
\[\lim S=\{i\mid \mathfrak{M}\vDash\exists m\forall m'\geq m\ (i,m')\in S\}=\{i\mid\mathfrak{M}\vDash\forall m\exists m'\geq m\ (i,m')\in S\}.\]

We define $n$-convergence and $\lim^n S$ recursively.  Every $S$ is $0$-convergent and $\lim^0S=S$.  If $S$ is $n$-convergent, we say $S$ is $n+1$-convergent if $\lim^nS$ is convergent, and define $\lim^{n+1}S=\lim\lim^nS$.
\end{definition}
Of particular importance is the fact that if $S\in\mathcal{M}$, the set $\lim^nS$ is $\Delta_{n+1}$ in $\mathfrak{M}$ using $S$ as a parameter.

To actually build models, it is convenient to use the following standard result:
\begin{theorem}[Friedman \cite{friedman:restricted_induction}]\label{finish_model}
If $\mathfrak{M}$ is a model of $I\Sigma_n$ then it is an $\omega$-submodel of a model of $\RCA+I\Sigma_n$.  
\end{theorem}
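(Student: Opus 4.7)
The plan is to close $\mathcal{M}$ under $\Delta^0_1$-definability in a single step and verify that the resulting structure $(M,\mathcal{M}')$ witnesses the conclusion. Specifically, I would let $\mathcal{M}'$ consist of $\mathcal{M}$ together with every $X \subseteq |M|$ for which there are a $\Sigma^0_1$ formula $\varphi$ and a $\Pi^0_1$ formula $\psi$, with parameters in $|M| \cup \mathcal{M}$, satisfying $\mathfrak{M} \vDash \forall x\,(\varphi(x) \leftrightarrow \psi(x))$ and $X = \{x : \mathfrak{M} \vDash \varphi(x)\}$.

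First I would check that $(M,\mathcal{M}')$ satisfies the $\Delta^0_1$-comprehension scheme. Given a provably-equivalent $\Sigma^0_1$/$\Pi^0_1$ pair with set parameters $\bar{B} \subseteq \mathcal{M}'$, substitute each $B_i$ by its underlying $\mathcal{M}$-level definition, using the $\Sigma^0_1$ form at positive occurrences of $t \in B_i$ and the $\Pi^0_1$ form at negative ones. The resulting pair has parameters only in $\mathcal{M}$, is still of the correct $\Sigma^0_1$/$\Pi^0_1$ form, and remains provably equivalent, so the defined set already belongs to $\mathcal{M}'$; no iteration of the closure step is needed.

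The main step is to verify that $(M,\mathcal{M}')$ still satisfies $I\Sigma_n$. Applying the same polarity-sensitive substitution to any $\Sigma^0_n$ formula $\chi(x,\bar{B})$ with $\bar{B} \subseteq \mathcal{M}'$ produces an equivalent formula $\chi'(x)$ with parameters only in $\mathcal{M}$. An extra existential quantifier introduced by replacing a positive membership atom with its $\Sigma^0_1$ definition, and an extra universal quantifier from replacing a negative one with its $\Pi^0_1$ definition, can each be absorbed into the matching block of the $\Sigma^0_n$ prefix; after prenexing, $\chi'$ is again $\Sigma^0_n$. Induction for $\chi'$ then follows from the hypothesis $\mathfrak{M} \vDash I\Sigma_n$, and unwinding the equivalence gives induction for $\chi$.

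The principal obstacle is the complexity bookkeeping in this substitution step. One must verify that both a $\Sigma^0_1$ and a $\Pi^0_1$ defining formula are genuinely available for each set in $\mathcal{M}'$ — which is exactly why the closure is under $\Delta^0_1$-definability rather than anything weaker — and one must check carefully that each extra quantifier absorbs into an existing block of the $\Sigma^0_n$ prefix rather than introducing a new alternation. The fact that a single pass of closure already gives a collection satisfying $\Delta^0_1$-comprehension, proved by the same unfolding argument, is what keeps the construction finite and avoids any transfinite iteration.
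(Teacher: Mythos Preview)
The paper does not prove this theorem; it is quoted as a known result of Friedman and used as a black box, so there is no in-paper argument to compare against. Your outline is the standard proof and is correct.

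The one step worth making more explicit is the ``absorption'' of the new quantifiers. After you replace each atom $t\in B_i$ by its $\Delta^0_1$ definition, the original $\Delta_0$ matrix becomes a formula built from $\Delta^0_1$ literals by $\wedge$, $\vee$, and \emph{bounded} quantifiers. Showing that this is again $\Delta^0_1$ --- so that it can merge with the innermost block of the $\Sigma^0_n$ prefix without introducing a new alternation --- requires pushing an unbounded $\exists$ past a bounded $\forall$ (and dually), which is exactly $B\Sigma^0_1$; you have it because $I\Sigma_1\subseteq I\Sigma_n$ and $I\Sigma_1\vdash B\Sigma_1$. Your polarity-sensitive substitution is one way to organize this, but the cleanest formulation is simply that over $B\Sigma^0_1$ the class of $\Delta^0_1$ predicates is closed under Boolean combinations and bounded quantification, so the matrix stays $\Delta^0_1$ and you then choose its $\Sigma^0_1$ or $\Pi^0_1$ form according to the parity of $n$. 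You already flag this as the delicate point; naming the collection principle makes the bookkeeping transparent.
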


Our main tool for proving results about $\Pmax(\RCA+I\Sigma_n)$ is the following theorem:
\begin{theorem}\label{main_tool_n}
  Let $\mathfrak{M}=(M,\mathcal{M})$ be a countable model of $\RCA+I\Sigma_n$, and let $W\subseteq |M|$.  Then $\mathfrak{M}$ is an $\omega$-submodel of some $\mathfrak{M}'=(M,\mathcal{M}')$ such that $\mathfrak{M}'\vDash\RCA+I\Sigma_n$ and there is some $S\in\mathcal{M}'$ with $\lim^nS=W$.
\end{theorem}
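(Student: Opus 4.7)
The plan is to reduce, via Friedman's Theorem~\ref{finish_model}, to constructing a single set $S\subseteq|M|$ such that adjoining it produces a two-sorted structure $(M,\mathcal{M}\cup\{S\})$ satisfying $I\Sigma_n$ with $S$ admitted as a parameter, and satisfying $\lim^n S=W$. Theorem~\ref{finish_model} then supplies a further extension $\mathcal{M}'\supseteq\mathcal{M}\cup\{S\}$ making $(M,\mathcal{M}')$ a model of $\RCA+I\Sigma_n$, as required.

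To build $S$, I would exploit the countability of $\mathfrak{M}$ by a stage-by-stage construction in which $S$ is the union of an increasing sequence of externally finite partial functions $p_s\colon D_s\to 2$. Alongside each $p_s$ I maintain, for finitely many $a\in|M|$, internal thresholds $t_s(a,1),\ldots,t_s(a,n)\in|M|$ with the commitment that $S$ must equal $1$ if $a\in W$ (and $0$ otherwise) throughout the tail $\{(a,m_1,\ldots,m_n):m_i\geq t_s(a,i)\text{ for all }i\}$. Using enumerations of $|M|$ and of the $\Sigma_n$ formulas $\phi_k(x,\bar Y,\bar y,S)$ with parameters from $\mathcal{M}$ and $|M|$, I process one of each at every stage, meeting two families of requirements: convergence requirements whose cumulative effect forces $\lim^n S=W$, and induction requirements that either force $\forall x\,\phi_k(x,S)$ or force the failure of the base case or successor step for $\phi_k$.

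The main obstacle is preserving $I\Sigma_n$, since introducing $S$ as a parameter could in principle allow $\Sigma_n$ formulas to define new proper cuts in $|M|$. I would address this by developing a forcing relation $p\Vdash\phi$ for $\Sigma_n$ formulas $\phi(S)$, with the key lemma (proved by induction on complexity) that every condition has an extension deciding any given $\phi$; the stage dedicated to $\phi_k$ then either forces the induction conclusion or forces a failure of its hypothesis, so no new proper cut can survive. The delicate point is ensuring compatibility of convergence commitments with forcing extensions: when new thresholds $t_s(a,i)$ are introduced, they must be chosen (using $I\Sigma_n$ in $\mathfrak{M}$ to bound the $\Sigma_n$ witnesses already in play) internally large enough that the committed tail is disjoint from the current finite domain $D_s$ and does not invalidate any previously forced decision. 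Balancing these two types of requirements against each other is the heart of the argument; once this is done, the union of the $p_s$ is the desired $S$.
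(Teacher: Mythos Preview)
Your overall framework matches the paper's: reduce via Theorem~\ref{finish_model}, build $S$ by forcing with finite approximations plus convergence commitments, meet induction requirements and convergence requirements in alternation. But two essential technical ingredients are missing, and without them the argument does not go through.

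First, and most seriously, you never explain why the set of $c$ such that some extension forces $\phi(c,S)$ is itself $\Sigma_n$ over $\mathfrak{M}$. That is exactly what you need in order to invoke $I\Sigma_n$ in $\mathfrak{M}$ and extract a least such $c$; simply knowing that every condition has an extension deciding $\phi$ is not enough. With the naive definition of forcing, ``$p\Vdash\exists x\psi(x)$'' already quantifies over all extensions, and iterating this $n$ times gives a relation far beyond $\Sigma_n$. The paper's solution is a stratified notion of extension $\preceq_0,\preceq_1,\dots,\preceq_{n-1}$, with conditions carrying data $I^p_1,\dots,I^p_n$ at each level, together with the key lemma that if \emph{any} $q\preceq p$ forces a $\Sigma_{m+1}$ formula then already some $q'\preceq_m p$ does. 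This collapses the quantifier over extensions to a first-order quantifier of the right complexity at each stage, so that forcing a $\Sigma_m$ formula becomes $\Pi_{m+1}$ internally, and $I\Sigma_n$ applies (Lemma~\ref{lemma_induction}). Your proposal has only the single top layer (the finite set of $a$'s with product thresholds) and no mechanism for this complexity control.

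Second, your product-tail commitment $\{(a,m_1,\dots,m_n):m_i\geq t_s(a,i)\}$ does not by itself make $S$ $n$-convergent: for $(a,m_1)$ with $m_1<t_s(a,1)$ nothing forces $\lim S(a,m_1,\cdot)$ to exist, so $\lim S$ need not be defined. The paper handles this with nested commitments $I^p_i,V^p_i$ at \emph{every} level $1\le i\le n$, with $V^p_i$ a function of the whole initial segment $\vec s_{[n,i]}$, and a separate density requirement that eventually every tuple enters the appropriate $I^{p_j}_r$. These intermediate layers are not optional bookkeeping; they are precisely what makes both the convergence and the complexity analysis work simultaneously.
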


In other words, while we cannot expect to add arbitrary sets to a nonstandard model (since such a set could easily violate induction), we can add descriptions of arbitrary sets as long as we ``keep them at arm's length''---as long as the set is described only as a limit which is too complicated to be covered by the induction axioms in the underlying theory.

\subsection{The $I\Sigma_1$ Case}
We first prove the case where $n=1$ to illustrate the method.  The main tool is a forcing argument based on the technique from \cite{cholak:MR1825173}.

\begin{theorem}
  Let $\mathfrak{M}=(M,\mathcal{M})$ be a countable model of $\RCA+I\Sigma_1$, and let $W\subseteq |M|$.  Then $\mathfrak{M}$ is an $\omega$-submodel of some $\mathfrak{M}'=(M,\mathcal{M}')$ such that $\mathfrak{M}'\vDash\RCA+I\Sigma_1$ and there is some $S\in\mathcal{M}'$ with $\lim S=W$.
\end{theorem}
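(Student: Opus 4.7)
The plan is to adjoin a set $S$ via a forcing argument, then invoke Theorem~\ref{finish_model} to close under $\Delta_1$-comprehension. A condition will be a pair $p = (\sigma_p, N_p)$ where $\sigma_p \colon |M|^2 \to \{0,1\}$ is an $M$-finite partial function and $N_p \in |M|$ is a ``commitment threshold'' requiring $\sigma_p(i,j) = \chi_W(i)$ whenever $(i,j) \in \dom(\sigma_p)$ with $i < N_p \leq j$. Extensions satisfy $\sigma_q \supseteq \sigma_p$ and $N_q \geq N_p$; the generic $S \subseteq |M|^2$ is then the union of the $\sigma_p$'s along the filter.

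To force $\lim S = W$, I would meet, for each $(i,j) \in |M|^2$ and each $k \in |M|$, the densities ``$\sigma_p$ is defined at $(i,j)$'' and ``$N_p > k$''. The latter is delicate only in that raising $N$ retroactively subjects existing values of $\sigma_p$ to the commitment constraint; choosing $N_q$ larger than every second coordinate of points in $\dom(\sigma_p)$ whose first coordinate lies in $[N_p, k]$ makes the constraint vacuous on $\sigma_p$, after which $\sigma_q$ may be freely extended. Once $N_p$ exceeds $i$, every further extension matches $\chi_W(i)$ on column $i$ past $N_p$, giving $\lim S = W$.

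The substantive task is ensuring $(M, \mathcal{M} \cup \{S\}) \models I\Sigma_1$. For a $\Sigma_1$ formula $\phi(x, \dot S, \vec A) \equiv \exists y\,\psi(x, y, \dot S, \vec A)$ with $\psi$ bounded and $\vec A \in \mathcal{M}^{<\omega}$, declare $\sigma \Vdash \phi(x)$ to hold iff some $y$ makes $\psi(x, y, \dot S, \vec A)$ true when $\dot S$ is replaced by the values $\sigma$ assigns at the ($M$-finitely many) queried points. The crucial design feature is that this relation is $\Sigma_1$-definable in $\mathfrak{M}$ using only $\sigma$ and $\vec A$ as parameters --- it does not reference $W$ or $N_p$, which is why we segregate the commitment into $N_p$ rather than constraining which $\sigma$'s are admissible.

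Hence $\tilde T_\phi^{\sigma_p} = \{x : \exists \sigma \supseteq \sigma_p,\ \sigma \Vdash \phi(x)\}$ is $\Sigma_1$-definable over $\mathfrak{M}$, so by $I\Sigma_1$ in $\mathfrak{M}$ it has a least element $m^*$ when nonempty, witnessed by some $\sigma^* \supseteq \sigma_p$. Taking $N$ larger than every coordinate in $\dom(\sigma^*)$ produces a valid $q = (\sigma^*, N) \leq p$; every $r \leq q$ then has $\sigma_r \supseteq \sigma^*$, placing $m^*$ in $T_\phi^{\sigma_r} \subseteq \tilde T_\phi^{\sigma_p}$ and so pinning $\min T_\phi^{\sigma_r} = m^*$. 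By countability of $\mathfrak{M}$, there are only countably many such $\phi$, and meeting this density along with those of the previous paragraph yields a generic $S$ for which every $\Sigma_1$ formula over $\mathcal{M} \cup \{S\}$ with a witness has a least one. The main obstacle is precisely this design issue: arranging the forcing so that the forcing relation for $\Sigma_1$ formulas is $\Sigma_1$-definable in $\mathfrak{M}$ without any reference to $W$, so that the ground model's $I\Sigma_1$ can be invoked on $\tilde T_\phi^{\sigma_p}$. Once this is in hand, a standard truth lemma plus Theorem~\ref{finish_model} complete the proof.
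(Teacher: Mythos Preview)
Your overall plan---force $S$ while keeping the $\Sigma_1$ forcing relation free of $W$, so that ground-model $I\Sigma_1$ supplies least witnesses, then close off with Theorem~\ref{finish_model}---is exactly the paper's. The gap is in your commitment mechanism.

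The claim ``once $N_p$ exceeds $i$, every further extension matches $\chi_W(i)$ on column $i$ past $N_p$'' is false. For $q\leq p$ with $N_q>N_p$, the validity constraint on $q$ only governs pairs $(i,j)$ with $j\geq N_q$; new values at $(i,j)$ with $N_p\leq j<N_q$ are entirely unconstrained. Your $I\Sigma_1$ step uses precisely this loophole: you take an \emph{arbitrary} $M$-finite $\sigma^*\supseteq\sigma_p$ and then raise $N$ above $\dom(\sigma^*)$, so $(\sigma^*,N)$ is a valid condition regardless of what $\sigma^*$ says. But then $\sigma^*$ may be forced to record values incompatible with $W$. Concretely, fix $i_0\notin W$ and let $\phi_b(x)$ be $(x=0)\wedge(\exists j>b)\,(i_0,j)\in\dot S$. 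Whatever $p$ is, $\tilde T_{\phi_b}^{\sigma_p}=\{0\}$, and any witnessing $\sigma^*$ must put a $1$ in column $i_0$ beyond $b$; your step then commits that $1$ into $S$. Handling such $\phi_b$ for cofinally many $b$ leaves cofinally many $1$'s in column $i_0$, so $\lim S$ either fails to exist there or disagrees with $W$. You cannot repair this by restricting to $\sigma^*$'s that honour the constraint relative to $N_p$, because that constraint refers to $\chi_W$ on all of $[0,N_p)$, which for nonstandard $N_p$ is not available as a parameter in $\mathfrak{M}$---and that would destroy the $\Sigma_1$-definability of $\tilde T_\phi^{\sigma_p}$ that your argument relies on.

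The paper resolves this tension by committing only a truly finite set $I^p$ of columns, each with a threshold $V^p(i)$ that is \emph{frozen} under extension, and phrasing coherence purely internally: values in column $i$ past $V^p(i)$ must agree with one another (no mention of $W$). The sub-order $\preceq_0$ used for the $I\Sigma_1$ requirement fixes $I^p$ and $V^p$, so quantifying over $\preceq_0$-extensions is internal to $\mathfrak{M}$ and the needed set is $\Sigma_1$. The link to $W$ enters only externally, one column at a time, by seeding $\chi^q(i,V^q(i))=\chi_W(i)$ when $i$ is first added to $I^q$; since the threshold never moves thereafter, all later values past $V^q(i)$ are forced to agree with this seed.
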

\begin{proof}
  By Theorem \ref{finish_model}, it suffices to show that there is an $S$ such that $\lim S=W$ and for every $\Sigma_1$ formula $\phi(x,X)$ with parameters from $\mathfrak{M}$, induction holds for $\phi(x,S)$.

We prove this by forcing.  Our conditions will be tuples $p=(\chi^p,I^p,V^p)$ where:
\begin{itemize}
\item $\chi_B^p$ is a $\{0,1\}$-valued function with bounded domain whose graph is in $\mathcal{M}$,
\item $I^p$ is finite,
\item $V^p:I^p\rightarrow |M|$,
\item Whenever $i\in I^p$, there is an $s\geq V^p(i)$ such that $\chi^p(i,s)$ is defined,
\item If $i\in I^p$, $s,s'\geq V^p(i)$ and $\chi^p(i,s)$ and $\chi^p(i,s')$ are both defined then $\chi^p(i,s)=\chi^p(i,s')$.
\end{itemize}
We define $\nu^p$ to be the partial function such that:
\begin{itemize}
\item $\chi^p\subseteq\nu^p$,
\item If $i\in I^p$ and $s\geq V^p(i)$ then $\nu^p(i,s)=\chi^p(i,s')$ where $s'\geq V^p(i)$ is such that $\chi^p(i,s')$ is defined.
\end{itemize}

We say $p'\preceq p$ if $\chi_B^p\subseteq\chi_B^{p'}$, $I^p\subseteq I^{p'}$, and $V^{p'}{\upharpoonright} I^p=V^p$.  Note that this implies $\nu^p\subseteq \nu^{p'}$.  We write $p'\preceq_0 p$ if $p'\preceq p$ and $I^p=I^{p'}$.

$\chi^p$ is an approximation to the characteristic function of the set $S$ we ultimately built.  $I^p$ is a collection of points where we have fixed the value of the limit, and $V^p$ is the point by which the limit has converged.

We consider formulas in the language $\mathcal{L}^2(G)$, given by expanding $\mathcal{L}^2$ with a fresh set constant $G$ and constants for all parameters from $\mathfrak{M}$.  It is convenient to restrict consideration to prenex formulas.

If $\phi$ is a $\Delta_0$ formula in $\mathcal{L}^2(G)$, we say $p\Vdash\phi(G)$ if for every $S$ such that $\chi_S\upharpoonright\dom(\nu^p)=\nu^p$, $\phi(S)$ holds.  We say $p\Vdash\exists x\phi(x,G)$ if whenever $p'\preceq p$, there is some $p''\preceq p'$ and some $m\in |M|$ such that $p''\Vdash\phi(m,G)$.  We say $p\Vdash\forall x\phi(x,X)$ if for every $m\in |M|$, $p'\Vdash\phi(m)$.  Note that when $\phi$ is $\Delta_0$, $p\Vdash\phi$ is a $\Delta_1$ statement.

We will construct an infinite sequence of conditions $p_0\succeq p_1\succeq\cdots$ satisfying the following properties:
\begin{itemize}
\item If $p_j\Vdash\exists x\phi(x)$ where $\phi$ is $\Delta_0$ then there is a $j'\in\mathbb{N}$ and a $k\in|M|$ such that $p_{j'}\Vdash\phi(k)$,
\item For each $\Sigma_1$ formula $\phi(x)$ in $\mathcal{L}^2(G)$, either there is a $j$ such that $p_j\Vdash\forall x\neg\phi(x)$ or there is a $j$ and a $c\in |M|$ such that $p_j\Vdash\phi(c)\wedge\forall x<c\,\neg\phi(x)$
\item For each $i\in|M|$, there is some $j$ with $i\in I^{p_j}$ and $\chi^{p_j}(i,V^{p_j}(i))=\chi_W(i)$.
\end{itemize}
We may break each of these properties up into countably many requirements.  In most forcing arguments, we would show that the sequence satisfies all these properties by showing that each requirement is satisfied by a dense set of conditions.  In order to avoid conflicts with the third property, however, we need slightly more: we need to show that the first two properties are dense with respect to $\preceq_0$, and only in order to satisfy the third property will we add elements to $I^p$.

The first set of requirements is easily handled: if $p\Vdash\exists x\phi(x)$ then there is a $q\preceq p$ and a $k$ such that $q\Vdash\phi(k)$.  Since forcing a $\Delta_0$ statement is $\Delta_1$, this depends on only boundedly many values of $\nu^q$, so we may choose a $q'\preceq_0 p$ so that $\chi^{q'}$ agrees with $\nu^q$ on a large enough bounded subset so that $q'\Vdash\phi(k)$.

To satisfy the second family of requirements, let a condition $p$ and some $\Sigma_1$ formula $\exists y\phi(x,y)$ be given.  Consider the set of $c$ such that there is some $q\preceq_0 p$ and some $m$ such that $q\Vdash\phi(c,m)$.  If there is no such $c$ then we have $p\Vdash\forall x\forall y\neg\phi(x,y)$, and we are finished.  Otherwise, the set of such $c$ is a $\Sigma_1$ set, so (since the ground model satisfies $I\Sigma_1$) there is a least such $c$, and so we may pass to some $q\preceq_0 p$ so that $q\Vdash\exists y\phi(c,y)$.  It suffices to show that if $c'<c$, $q\Vdash\forall y\neg\phi(c',y)$; suppose not, so there is some $c'<c$, some $q'\preceq q$, and some $m$ such that $q'\Vdash\phi(c',m)$.  Then, since forcing a $\Delta_0$ formula is a $\Delta_1$ property, again there is a bounded portion of $\nu^{q'}$ witnessing this, and therefore there is a $q_0\preceq_0 p$ so that $q_0\Vdash\phi(c',m)$.  But this contradicts the leastness of $c$.

The third set of requirements is satisfied by setting $I^q=I^p\cup\{i\}$, setting $V^q(i)$ to be larger than any element $s$ with $\chi^p(i,s)$ defined, and setting $\chi^q(i,V^q(i))=\chi_W(i)$.

We take $S=\{x\mid \exists i\ \nu^{p_i}(x){=}1\}$.  We have that whenever $p_i\Vdash\phi(G)$ where $\phi(G)$ is $\Pi_1$ or $\Sigma_1$, $\phi(X)$ holds: when $\phi$ is $\Delta_0$ or $\Pi_1$ this is immediate from the definition, and for $\Sigma_1$ formulas this is enforced by the first set of requirements.  The second set of requirements ensures that $\Sigma_1$ induction holds for formulas including $S$ as a parameter.  The third set of requirements ensures that $\lim S=W$.
\end{proof}

\subsection{The General Case}

We now turn to the full proof of Theorem \ref{main_tool_n}.  Throughout this argument, we work with a fixed value $n$.  To manage notation, we introduce the following shorthand: we write $\vec s_{[k,i]}$ for the finite sequence $s_k,\ldots,s_i$.  In particular, instead of writing $f(s_k,\ldots,s_i)$, we will write $f(\vec s_{[k,i]})$.

Our forcing conditions will be tuples 
\[p=(\chi^p,\{I^p_i\}_{i\leq n}, \{V^p_i\}_{i\leq n})\]
where:
\begin{itemize}
\item $\chi^p$ is a $\{0,1\}$-valued function with bounded domain whose graph is in $\mathcal{M}$,
\item $I_n^p$ is a finite set,
\item For each $i$ with $0<i<n$, $I_i^p$ is a bounded set of sequences of length $n-i+1$ which is in $\mathcal{M}$,
\item For each $i$ with $0<i\leq n$, $V_i^p:I_i^p\rightarrow |M|$ is a function in $\mathcal{M}$,
\item (\emph{Realization}) If $1<i\leq n$ and $\vec s_{[n,i]}\in I_{i}^p$ then there is some $s_{i-1}\geq V_{i}^p(\vec s_{[n,i]})$ so that $\vec s_{[n,i-1]}\in I_{i-1}^p$,
\item (\emph{Realization}) If $\vec s_{[n,1]}\in I_1^p$ then there is some $s_{0}\geq V_1^p(\vec s_{[n,1]})$ so that $\chi^p(\vec s_{[n,0]})$ is defined,
\item (\emph{Coherence}) If $m< n$, $\chi^p(\vec s_{[n,m+1]},\vec t_{[m,0]})$ and $\chi^p(\vec s_{[n,m+1]},\vec t'_{[m,0]})$ are both defined, and for every $i\in[0,m]$, $(\vec s_{[n,m+1]},\vec t_{[m,i+1]})\in I^p_{i+1}$, $t_{i}\geq V_{i+1}^p(\vec s_{[n,m+1]},\vec t_{[m,i+1]})$, and $(\vec s_{[n,m+1]},\vec t'_{[m,i+1]})\in I^p_{i+1}$, $t'_{i}\geq V_{i+1}^p(\vec s_{[n,m+1]},\vec t'_{[m,i+1]})$, then
\[\chi^p(\vec s_{[n,m+1]},\vec t_{[m,0]})=\chi^p(\vec s_{[n,m+1]},\vec t'_{[m,0]}).\]
\end{itemize}

We say $q\preceq p$ if:
\begin{itemize}
\item $\chi^p\subseteq\chi^q$,
\item For each $i$ with $0<i\leq n$, $I_i^p\subseteq I_i^q$ and $V_i^p\subseteq V_i^q$.
\end{itemize}

The only reason that that $I^p_n$ has to be handled separately from $I^p_i$ for $i<n$ is that $I^p_n$ is required to be actually finite, while $I^p_i$ is merely $\mathfrak{M}$-finite.

 The coherence property requires some unpacking to understand.  We start with $I_1^p$: we intend for $S$ to be convergent, which means that for each $s$, there should be some $v$ such that $\chi_S(s,m)$ is the same for all $m\geq v$.  When $s=\vec s_{[n,1]}\in I_1^p$, this means we have decided that $v=V_1^p(\vec s_{[n,1]})$.  Similarly, when $n=2$, we also need $\lim S$ to be convergent, which means that for each $s$ there should be some $s_1$ so that when $t_1\geq s_1$, $(\lim S)(s,t_1)$ is the same for all $t_1\geq s_1$, which in turn means that $\lim_{m\rightarrow\infty}\chi_S(s,t_1,m)$ converges to the same value for each $t_1\geq s_1$ (though the point of convergence may itself depend on the choice of $t_1$).

To make this property easier to work with, we introduce some terminology.  We say that a sequence $(s_n,\ldots,s_0)$ is \emph{convergent from $m$ in $p$} if for every $i\in[0,m]$, $\vec s_{[n,i+1]}\in I^p_{i+1}$ and $s_i\geq V_{i+1}^p(\vec s_{[n,i+1]})$.  The purpose of the realization property is to ensure that whenever $\vec s_{[n,m+1]}\in I_{m+1}^p$, there is some sequence $\vec s_{[n,0]}$ which is convergent from $m$ in $p$.  Coherence then states that whenever $(\vec s_{[n,m+1]},\vec t_{[m,0]})$ and $(\vec s_{[n,m+1]},\vec t'_{[m,0]})$ are both convergent from $m$ and $\chi^p$ is defined on both, $\chi^p$ has the same value on both.  Therefore whenever $\vec s_{[n,m+1]}\in I^p_{m+1}$, we define $\chi_*^p(\vec s_{[n,m+1]})$ to be this uniquely defined value.

Note that whenever $\vec s_{[n,0]}$ is convergent from $m$ in $p$, $\vec s_{[n,0]}$ is also convergent from every $m'\leq m$, and so $\chi^*_p(\vec s_{[n,m+1]})=\chi^*_p(\vec s_{[n,m'+1]})$.

For any $i$ with $0\leq i\leq n$, we say $q\preceq_i p$ if $q\preceq p$ and for all $j>i$, $I^p_j=I^q_j$ (and therefore $V^p_j=V^q_j$).  ($q\preceq_n p$ is simply $q\preceq p$.)  When $q\preceq p$ we write $q\upharpoonright_i p$ for the condition with $\chi^{q\upharpoonright_i p}=\chi^q$ and $I_j^{q\upharpoonright_i p}=I_j^q$ if $j\leq i$ and $I_j^{q\upharpoonright_i p}=I_j^p$ if $j>i$.  It is easy to check that $q\upharpoonright_i p$ is a condition and $q\upharpoonright_i p\preceq_i p$.

If $q,r\preceq p$, $q\preceq_i p$ and $r\upharpoonright_i p=p$, we define $q\cup r$ to be the condition with $\chi^{q\cup r}=\chi^q$, $I_j^{q\cup r}=I_j^q$ if $j\leq i$ and $I_j^{q\cup r}=I_i^r$ if $i<j$.  Note that $q\cup r$ need \emph{not} be a condition itself: for instance, when $i=0$, $r$ could include some new element $(\ldots,s)$ in $I_1^r$ with $V^r_1(\ldots,s)=v$, while $\chi^q$ could be newly defined on $(\ldots,s,v+1)$ with a different value from $\chi^p(\ldots,s,v)$.  However the realization property sharply limits how $r$ could expand and still satisfy $r\upharpoonright_{i+1}p=p$, so we have the following:
\begin{lemma}\label{cond_union}
  If $q\preceq_i p$ and $r\upharpoonright_{i+1}p=p$ then $q\cup r$ is a condition.
\end{lemma}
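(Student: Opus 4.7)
The plan is to verify that $q \cup r$ satisfies each clause of the definition of a condition. The bounded-domain and (bounded-)finiteness requirements on $\chi^{q\cup r}$, $I^{q\cup r}_j$, and $V^{q\cup r}_j$ are inherited from those on $q$ and $r$, since each component is taken directly from one or the other. Realization reduces to a three-case analysis by layer. For a step from layer $i'+1$ to layer $i'$ with $i' > i$, both layers come from $r$ and realization in $r$ applies; for $i' < i$, both come from $q$; and at the interface $i' = i$ we have $I^{q\cup r}_{i+1} = I^r_{i+1} = I^p_{i+1}$ by the hypothesis $r\upharpoonright_{i+1} p = p$, so realization in $p$ produces a witness in $I^p_i \subseteq I^q_i = I^{q\cup r}_i$, with $V^{q\cup r}_{i+1} = V^p_{i+1}$ providing the matching bound. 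Realization for $\chi$ when $i = 0$ uses $\chi^q \supseteq \chi^p$.

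The substantive work is coherence. Suppose $(\vec s_{[n,m+1]}, \vec t_{[m,0]})$ and $(\vec s_{[n,m+1]}, \vec t'_{[m,0]})$ are convergent from $m$ in $q \cup r$ with $\chi^q$ defined on both; I need to show the two values coincide. Because $I^{q\cup r}_j$ agrees with $I^q_j$ for $j \leq i$ and $I^{q\cup r}_{i+1} = I^p_{i+1} = I^q_{i+1}$, both tuples are automatically convergent from $\min(m, i)$ in $q$. If $m \leq i$, coherence of $q$ at level $m$, with common top $\vec s_{[n, m+1]}$, gives the equality immediately.

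The interesting case is $m > i$. Set $\vec u = (\vec s_{[n,m+1]}, \vec t_{[m,i+1]})$ and $\vec u' = (\vec s_{[n,m+1]}, \vec t'_{[m,i+1]})$, both in $I^p_{i+1}$. Coherence of $q$ at level $i$ yields $\chi^q(\vec s, \vec t_{[m,0]}) = \chi_*^q(\vec u)$ and similarly for $\vec u'$. Because $\chi^q$ extends $\chi^p$ and $q$'s lower layers extend $p$'s while agreeing at layer $i+1$, any $p$-witness for $\chi_*^p(\vec u)$ is also a $q$-witness for $\chi_*^q(\vec u)$, so $\chi_*^q(\vec u) = \chi_*^p(\vec u)$ and likewise for $\vec u'$. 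The problem reduces to $\chi_*^p(\vec u) = \chi_*^p(\vec u')$.

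For this final step I invoke coherence of $r$. The assumed convergence in $q \cup r$ at layers $i' \in [i, m]$ translates directly into memberships $(\vec s, \vec t_{[m,i'+1]}) \in I^r_{i'+1}$ with matching $V^r$ bounds (and likewise for $\vec t'$); in particular $\vec s \in I^r_{m+1}$. Starting from $\vec u \in I^p_{i+1}$, iteratively apply realization in $p$ to extend to $(\vec u, \vec w_{[i,0]})$ with $\chi^p$ defined; since $I^r_j = I^p_j$ and $V^r_j = V^p_j$ for $j \leq i+1$, this extended tuple is convergent from $m$ in $r$. Do the same for $\vec u'$, obtaining $(\vec u', \vec w'_{[i,0]})$. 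Coherence of $r$ at level $m$ with common top $\vec s$ forces $\chi^r(\vec u, \vec w_{[i,0]}) = \chi^r(\vec u', \vec w'_{[i,0]})$, and via $\chi^r = \chi^p$ together with the definition of $\chi_*^p$ this yields $\chi_*^p(\vec u) = \chi_*^p(\vec u')$. The main obstacle is precisely this choreography: $q$'s coherence handles the bottom $i+1$ layers, $r$'s coherence handles the top $m - i$ layers, and the two are glued together at layer $i+1$, where the hypothesis $r\upharpoonright_{i+1} p = p$ ensures the two sides of the bridge actually meet.
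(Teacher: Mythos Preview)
Your proof is correct and follows essentially the same approach as the paper's. Both arguments split coherence into the cases $m \leq i$ (handled by coherence of $q$) and $m > i$ (glued at layer $i+1$ via $p$), and both rely on the identifications $I^{q\cup r}_{i+1} = I^r_{i+1} = I^p_{i+1}$ and $\chi^r = \chi^p$ coming from $r\upharpoonright_{i+1}p = p$. The only difference is presentational: in the $m > i$ case the paper routes both values through the single quantity $\chi^r_*(\vec s_{[n,m+1]})$, whereas you unwind this by explicitly building $p$-realizations below $\vec u$ and $\vec u'$ and invoking coherence of $r$ at level $m$; these are the same computation. Your added verification of realization at the interface layer is something the paper leaves to the reader, so your write-up is in fact more complete.
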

\begin{proof}
  We need only check the coherence property of $q\cup r$.  Suppose $(\vec s_{[n,m+1]},\vec t_{[m,0]})$ and $(\vec s_{[n,m+1]},\vec t'_{[m,0]})$ are both convergent from $m<n$ and $\chi^q$ is defined on both.  If $m\leq i$ then since the property of being convergent from $m$ depends only on $I^{q\cup r}_j, V^{q\cup r}_j$ for $j\leq m+1$, both sequences were convergent from $m$ in $q$, and therefore by the coherence of $q$, $\chi^{q\cup r}=\chi^q$ takes the same value on both.

So consider the case where $m>i$.  Then since $(\vec s_{[n,m+1]},\vec t_{[m,0]})$ is convergent from $m$, $(\vec s_{[n,m+1]},\vec t_{[m,0]})$ must also be convergent from $i<m$, and $(\vec s_{[n,m+1]},\vec t_{[m,i+1]})$ must have already belonged to $I^p_{i+1}$, and so
\[\chi^r_*(\vec s_{[n,m+1]})=\chi^r_*(\vec s_{[n,m+1]},\vec t_{[m,i+1]})=\chi^p_*(\vec s_{[n,m+1]},\vec t_{[m,i+1]}).\]
But also $\chi^q_*(\vec s_{[n,m+1]},\vec t_{[m,i+1]})=\chi^p_*(\vec s_{[n,m+1]},\vec t_{[m,i+1]})$, so in particular $\chi^q(\vec s_{[n,m+1]},\vec t_{[m,0]})=\chi^r_*(\vec s_{[n,m+1]})$.

By a similar argument, $\chi^q(\vec s_{[n,m+1]},\vec t'_{[m,0]})=\chi^r_*(\vec s_{[n,m+1]})$, so we have
\[\chi^q(\vec s_{[n,m+1]},\vec t_{[m,0]})=\chi^q(\vec s_{[n,m+1]},\vec t'_{[m,0]})\]
as desired.
\end{proof}

 We define $\nu^p$ to be a $\{0,1\}$-valued partial function in $\mathcal{M}$ such that:
 \begin{itemize}
 \item $\chi^p\subseteq\nu^p$,
 \item If $\vec s_{[n,1]}\in I^p_1$ then for any $s_0\geq V^p_1(\vec s_{[n,1]})$, $\nu^p(\vec s_{[n,0]})=\chi^p_*(\vec s_{[n,1]})$.
 \end{itemize}
Clearly if $q\preceq p$ then $\nu^p\subseteq\nu^q$.

We define when a condition forces a formula.  We again consider formulas from the language $\mathcal{L}(G)$, which extends $\mathcal{L}^2$ by constants for all elements of $\mathfrak{M}$ and a fresh set constant $G$.  It will suffice to only consider formulas in prenex form.
\begin{itemize}
\item If $\phi$ is $\Delta_0$, $p\Vdash\phi(G)$ if whenever $\chi_S\upharpoonright\dom(\nu^p)=\nu^p$, $\phi(S)$ holds,
\item $p\Vdash\exists x\phi(x)$ if $\forall q\preceq p\,\exists r\,\preceq q\,\exists k\ r\Vdash\phi(k)$,
\item $p\Vdash\forall x\phi(x)$ if $\forall k\ p\Vdash\phi(k)$.
\end{itemize}
When $\phi$ is in prenex form, we will abuse notation and write $\neg\phi$ for the prenex form of the negation.

It is easy to see that when $\phi$ is $\Delta_0$, the set of $p$ such that $p\Vdash\phi$ is $\Delta_1$.  We need to extend this so that the set of $p$ forcing $\Sigma_m$ or $\Pi_m$ sentences, for $m<n$, is defined internally.

\begin{lemma}
  If $q\preceq p$ and $p\Vdash\phi$ then $q\Vdash\phi$.
\end{lemma}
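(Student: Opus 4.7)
The plan is to prove this monotonicity lemma by straightforward induction on the prenex complexity of $\phi$, with the base case pinned down by the fact that $\nu^p\subseteq\nu^q$ whenever $q\preceq p$ (noted in the paragraph introducing $\nu^p$).

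For the base case, suppose $\phi$ is $\Delta_0$ and $p\Vdash\phi(G)$. Any $S$ with $\chi_S\upharpoonright\dom(\nu^q)=\nu^q$ automatically satisfies $\chi_S\upharpoonright\dom(\nu^p)=\nu^p$, because $\nu^p\subseteq\nu^q$ implies $\dom(\nu^p)\subseteq\dom(\nu^q)$ and the two partial functions agree on $\dom(\nu^p)$. So by hypothesis $\phi(S)$ holds, and hence $q\Vdash\phi(G)$.

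For the inductive steps, the existential case uses transitivity of $\preceq$: if $p\Vdash\exists x\,\phi(x)$ and $q\preceq p$, then for any $r\preceq q$ we have $r\preceq p$, so there exist $s\preceq r$ and $k$ with $s\Vdash\phi(k)$; this is exactly the definition of $q\Vdash\exists x\,\phi(x)$, and no appeal to the inductive hypothesis is actually needed at this step. The universal case is immediate from the inductive hypothesis: $p\Vdash\forall x\,\phi(x)$ means $p\Vdash\phi(k)$ for all $k$, and the inductive hypothesis applied to each $\phi(k)$ gives $q\Vdash\phi(k)$ for all $k$, hence $q\Vdash\forall x\,\phi(x)$.

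There is no real obstacle; the only thing to verify carefully is the monotonicity $\nu^p\subseteq\nu^q$ for $q\preceq p$, which the paper has already observed. The induction is well-founded because prenex formulas have finitely many alternating quantifier blocks over a $\Delta_0$ matrix.
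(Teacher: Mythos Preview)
Your proof is correct and follows essentially the same approach as the paper: induction on complexity, with the $\Delta_0$ base case coming from $\nu^p\subseteq\nu^q$, the existential case handled by transitivity of $\preceq$ (without needing the inductive hypothesis), and the universal case by the inductive hypothesis. The paper's version is terser (it merely says the $\Delta_0$ case is ``immediate from the definition''), but the argument is the same.
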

\begin{proof}
  By induction on the complexity of $\phi$.  If $\phi$ is $\Delta_0$, this is immediate from the definition.  If $\phi$ is $\Pi_{m+1}$, this follows directly from the inductive hypothesis.  If $\phi$ is a $\Sigma_{m+1}$ formula $\exists x\psi(x)$, for any $q'\preceq q$, also $q'\preceq p$, so there is an $r\preceq q'$ and a $k$ such that $r\Vdash\psi(k)$, as needed.
\end{proof}

\begin{lemma}\label{neg_flip}
  If $p\not\Vdash\phi$ then there is a $q\preceq p$ such that $q\Vdash\neg\phi$.
\end{lemma}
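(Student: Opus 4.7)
The plan is to induct on the quantifier depth of $\phi$ in prenex form, using the convention (stated just above) that $\neg\phi$ denotes the prenex form of the negation, so that $\neg\neg\phi$ and $\phi$ are syntactically identical and have equal quantifier depth. For the base case, suppose $\phi$ is $\Delta_0$ and $p \not\Vdash \phi(G)$: pick $S$ with $\chi_S \upharpoonright \dom(\nu^p) = \nu^p$ and $\neg\phi(S)$, and use the boundedness of $\phi$ to fix a bounded $B \subseteq |M|$ on which $\chi_S \upharpoonright B$ already decides the truth of $\phi(S)$. Define $q$ by keeping $I_i^q = I_i^p$ and $V_i^q = V_i^p$ for all $i$ and setting $\chi^q = \chi^p \cup (\chi_S \upharpoonright B)$. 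Realization is automatic since $I,V$ are unchanged; for coherence, any newly filled value $\chi_S(\vec s_{[n,0]})$ that lies on a tuple convergent from some $m$ in $q$ is in particular convergent from $0$ in $p$, so $\nu^p(\vec s_{[n,0]})$ is already defined and equals $\chi^p_*(\vec s_{[n,1]}) = \chi^p_*(\vec s_{[n,m+1]})$, and the assumption $\chi_S \supseteq \nu^p$ forces the new value to be exactly this. Thus $q \preceq p$, and every $S'$ with $\chi_{S'} \supseteq \nu^q$ agrees with $S$ on $B$, so $\neg\phi(S')$ holds; therefore $q \Vdash \neg\phi$.

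For the inductive steps: if $\phi = \forall x\, \psi(x)$ and $p \not\Vdash \phi$, pick $k$ with $p \not\Vdash \psi(k)$, apply the induction hypothesis to obtain $q \preceq p$ with $q \Vdash \neg\psi(k)$, and invoke the preceding monotonicity lemma (given any $r \preceq q$, take $s = r$ and witness $k$) to conclude $q \Vdash \exists x\, \neg\psi(x) = \neg\phi$. If $\phi = \exists x\, \psi(x)$ and $p \not\Vdash \phi$, unpacking the existential clause of the forcing definition furnishes $q \preceq p$ such that no $r \preceq q$ and no $k$ satisfy $r \Vdash \psi(k)$. I claim $q \Vdash \forall x\, \neg\psi(x) = \neg\phi$: fixing $k$, if instead $q \not\Vdash \neg\psi(k)$ then the induction hypothesis, legitimate since $\neg\psi(k)$ has the same quantifier depth as $\psi(k)$ and therefore strictly less than $\phi$, produces $r \preceq q$ with $r \Vdash \neg\neg\psi(k) = \psi(k)$, contradicting the defining property of $q$.

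The only step that demands real care is the coherence verification in the base case. A priori, grafting $\chi_S$ onto $\chi^p$ could create two convergent tuples sharing a common prefix in some $I^q_{m+1}$ on which $\chi^q$ disagrees, which would mean $q$ is not even a condition. The reason this does not happen is that on the region where coherence bites, $\nu^p$ already pins the value and $\chi_S \supseteq \nu^p$ respects that choice; the observation immediately preceding Lemma~\ref{cond_union}, that $\chi^p_*(\vec s_{[n,m+1]}) = \chi^p_*(\vec s_{[n,m'+1]})$ for $m' \leq m$, is precisely what lets a single convergence assumption at level $0$ propagate to all higher levels of the coherence hierarchy. Everything else is the standard bookkeeping of a forcing negation-closure lemma.
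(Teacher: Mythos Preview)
Your proof is correct and follows the same induction on complexity as the paper; your inductive steps match the paper's (with the application of the hypothesis to $\neg\psi(k)$ in the existential case made more explicit), and your base case spells out in full what the paper dismisses as ``immediate.'' One small point worth noting in the base case: since $p\Vdash\phi$ is a $\Delta_1$ statement in $\mathfrak{M}$ for $\Delta_0$ $\phi$, the witnessing $\chi_S\upharpoonright B$ can be chosen to lie in $\mathcal{M}$, which you need so that $\chi^q$ meets the requirement that its graph belong to $\mathcal{M}$.
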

\begin{proof}
  By induction on $\phi$.  This is immediate when $\phi$ is $\Delta_0$.  If $p\not\Vdash\exists x\psi(x)$ then there is a $q\preceq p$ such that for every $k$ and every $r\preceq q$, $r\not\Vdash\psi(k)$.  Therefore by the inductive hypothesis, $q\Vdash\neg\psi(k)$ for every $k$, and therefore $q\Vdash\forall m\neg\psi(k)$, as desired.

If $p\not\Vdash\forall x\psi(x)$ then there is a $k$ such that $p\not\Vdash\psi(k)$, and so by the inductive hypothesis, there must be some $q\preceq p$ such that $q\Vdash\neg\psi(k)$, so $q\Vdash\exists x\neg\psi(k)$.
\end{proof}

\begin{lemma}
Suppose $m<n$, $q\preceq p$ and $q\Vdash\phi$ where $\phi$ is a $\Sigma_{m+1}$ formula.  Then there is a $q'\preceq_m p$ so that $q'\Vdash\phi$.
\end{lemma}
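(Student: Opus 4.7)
The plan is to induct on $m$, with the base case $m=0$ paralleling the corresponding step of the $I\Sigma_1$ argument: given $q\Vdash\exists x\,\psi(x)$ with $\psi$ a $\Delta_0$ formula, unpack the forcing (using $q_0=q$ in the definition) to obtain $r\preceq q$ and a witness $k$ with $r\Vdash\psi(k)$. Since forcing a $\Delta_0$ statement depends only on $\nu^r$ restricted to a bounded set $B$, form $q'$ by enlarging $\chi^p$ to agree with $\nu^r$ on $B$, leaving every $I_j^p,V_j^p$ untouched; then $q'\preceq_0 p$. Coherence of $q'$ is inherited because $\nu^r$ is already coherent with respect to $r$'s richer $I_j$-structure, and $q'$'s coherence requirement, which is based on the smaller $I_j^p$, is strictly weaker.

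For the inductive step with $m\geq 1$, write $\phi=\exists x\,\psi(x)$ with $\psi$ a $\Pi_m$ formula, extract $r\preceq q$ and $k$ with $r\Vdash\psi(k)$, and set $q':=r\upharpoonright_m p$. This $q'$ is automatically a condition (by the observation immediately following the definition of $\upharpoonright_i$) and satisfies $q'\preceq_m p$, so it suffices to establish the strengthened claim $q'\Vdash\psi(k)$. To this end I would prove, by a separate induction on formula complexity, an auxiliary \emph{invariance principle}: for any two conditions $s_1,s_2\preceq p$ that share $\chi$ and all of $I_j,V_j$ for $j\leq m$, and for any $\Sigma_{m'}$ or $\Pi_{m'}$ formula $\theta$ with $m'\leq m$, we have $s_1\Vdash\theta$ iff $s_2\Vdash\theta$. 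Since $r$ and $r\upharpoonright_m p$ agree on all such data (and $\nu^{s_1}=\nu^{s_2}$ follows when $m\geq 1$ because $\nu$ depends only on $\chi$ and on $I_1,V_1$), applying the principle to $\theta=\psi(k)$ finishes the inductive step.

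The main obstacle is the $\Sigma_{m'}$ step of the invariance principle: given $s'_2\preceq s_2$, one must produce a matching $s'_1\preceq s_1$ with the same level-$\leq m$ data so that an existential witness supplied by $s_1$ can be transferred back to $s_2$. The natural construction glues the low-level additions of $s'_2$ onto the high-level structure of $s_1$, which is exactly the regime Lemma~\ref{cond_union} is designed to handle. However, verifying coherence of the resulting hybrid requires checking that every pair of tuples newly convergent at levels above $m$ via $s_1$'s $I_j^{s_1}$ is assigned consistent $\chi$-values; handling this likely requires combining Lemma~\ref{cond_union} with Lemma~\ref{neg_flip} and the lower-complexity invariance hypothesis, so that residual mismatches can be absorbed into appropriate $\Sigma_{m'-1}$ or $\Pi_{m'-1}$ sub-formulas. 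Managing this interaction between coherence and the nested quantifier structure of $\psi$ is where the combinatorial heart of the argument will lie.
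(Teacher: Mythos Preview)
Your base case ($m=0$) is correct and matches the paper exactly. In the inductive step you correctly identify the candidate $q'=r\upharpoonright_m p$ and the goal of showing $q'\Vdash\psi(k)$, but you then route through a general ``invariance principle'' whose $\Sigma_{m'}$ case you do not actually prove and yourself flag as the combinatorial heart. That is where the gap lies: the gluing you sketch (low-level additions of an extension of $s_2$ onto the high-level structure of $s_1$) is not covered by Lemma~\ref{cond_union} as stated, since an arbitrary $s'_2\preceq s_2$ may also extend the high-level data of $s_2$, and there is no reason this is compatible with $s_1$'s high-level data. Making this work would force you to first drop the offending extension back down to a $\preceq_{m'-1}$-extension --- which is exactly the content of the lemma you are trying to prove.

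The paper sidesteps the invariance principle entirely and uses the inductive hypothesis of the lemma itself. Arguing by contradiction: if $r\upharpoonright_m p\not\Vdash\psi(k)$, then by Lemma~\ref{neg_flip} some extension of $r\upharpoonright_m p$ forces the $\Sigma_m$ formula $\neg\psi(k)$; by the inductive hypothesis (the present lemma at level $m-1$, applied with $r\upharpoonright_m p$ in the role of $p$) one may take this extension to be some $q'\preceq_{m-1}(r\upharpoonright_m p)$. Setting $p'=r\upharpoonright_m p$, one has $q'\preceq_{m-1}p'$ and $r\upharpoonright_m p'=p'$, so Lemma~\ref{cond_union} applies once, cleanly, to yield the condition $q'\cup r$ extending both $q'$ and $r$; this condition then forces both $\psi(k)$ and $\neg\psi(k)$, a contradiction. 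The missing idea in your plan is precisely this: the lemma's own IH already supplies the ``drop to $\preceq_{m-1}$'' that puts you in the hypothesis of Lemma~\ref{cond_union}, so no auxiliary invariance statement is needed.
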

\begin{proof}
By induction on $m$.  Suppose $m=0$ and $q\Vdash\exists x\phi(x)$; we may fix an $r\preceq q$ and a $k$ so that $r\Vdash\phi(k)$.  Since forcing a $\Delta_0$ formula depends on boundedly many values of $\nu^r$, we may find a $q'\preceq_0 p$ by taking $\chi^{q'}$ to agree with $\nu^r$ on these values, and therefore $q'\Vdash\phi(k)$, so $q'\Vdash\exists x\phi(x)$.

For $m>0$, suppose $q\Vdash\exists x\forall y\phi(x,y)$ where $\phi$ is a $\Sigma_{m-1}$ formula.  We may fix an $r\preceq q$ and a $k$ so that $r\Vdash\forall y\phi(k,y)$.  Consider $r\upharpoonright_m p$; if $r\upharpoonright p\Vdash\forall y\phi(k,y)$ then we are done since $(r\upharpoonright_m p)\preceq_m p$.  Otherwise, by the previous lemma, there must be some $r'\preceq(r\upharpoonright_m p)$ such that $r'\Vdash\exists y\neg\phi(k,y)$, and by the inductive hypothesis, some such $q'\preceq_{m-1}(r\upharpoonright_m p)$.  

Let $p'=r\upharpoonright_m p$.  Then $q'\preceq_{m-1}p'$ and $r\upharpoonright p'=p'$, so Lemma \ref{cond_union} applies, and $q'\cup r$ is a condition.  Since $q'\cup r\preceq q'$, we have $q'\cup r\Vdash\exists y\neg\phi(k,y)$.  Since $q'\cup r\preceq r$, we have $q'\cup r\Vdash\forall y\phi(k,y)$.  This is a contradiction, so $r\upharpoonright_m p\Vdash\forall y\phi(k,y)$.
\end{proof}
This implies that when $q\preceq p$ and $q\Vdash\phi$ where $\phi$ is a $\Pi_m$ formula, there is a $q'\preceq_m p$ so that $q'\Vdash\phi$.

Let $\exists x\psi(x)$ be a $\Sigma_{m}$ formula with $m<n$.  By Lemma \ref{neg_flip}, $p\Vdash\exists x\psi(x)$ iff there is no $q\preceq p$ such that $q\Vdash\forall x\neg\psi(x)$, and this in turn is equivalent to there being no $q'\preceq_m p$ such that $q'\Vdash\forall x\neg\psi(x)$.  Note that quantification over $q'\preceq_m p$ is internal, so by induction on the complexity of formulas, when $m<n$ and $\exists x\phi(x)$ is a $\Sigma_m$ formula, the set of $p$ such that $p\Vdash\exists x\phi(x)$ and the set of $p$ such that $p\Vdash\forall x\exists y\phi(x,y)$ are both $\Pi_{m+1}$.

\begin{lemma}[$I\Sigma_n$]\label{lemma_induction}
For any $p$ and any $\Sigma_n$ formula $\exists y\phi(x,y)$, there is a $q\preceq_{n-1} p$ such that one of the following holds:
\begin{itemize}
\item $q\Vdash\forall x\forall y\neg\phi(x,y)$,
\item There are a $c$ and a $k$ such that $q\Vdash\phi(c,k)$ and for every $c'<c$, $q\Vdash\forall y\neg\phi(c',y)$.
\end{itemize}
\end{lemma}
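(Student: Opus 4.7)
The plan is to imitate the $I\Sigma_1$ argument from the previous subsection, but using the machinery developed just before the statement: quantification over $q\preceq_{n-1}p$ is internal, forcing of a $\Pi_{n-1}$ formula is $\Pi_{n-1}$, and for any $q\preceq p$ forcing a $\Sigma_n$ formula we can find $q'\preceq_{n-1}p$ forcing the same formula. With these in hand, the proof reduces to applying $I\Sigma_n$ to a suitable $\Sigma_n$ set of candidate witnesses.

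Write $\phi(x,y)$ in the form required so that $\phi$ is $\Pi_{n-1}$, and define
\[A=\{c\in |M|\mid \exists q\preceq_{n-1}p\ \exists m\ q\Vdash\phi(c,m)\}.\]
The first step is to verify that $A$ is $\Sigma_n$ with parameters from $\mathfrak{M}$: the inner relation $q\Vdash\phi(c,m)$ is $\Pi_{n-1}$ by the complexity calculation preceding the lemma, the quantifier over $q\preceq_{n-1}p$ is internal by the same observation, and $\exists m$ is just a numeric existential, so altogether $A$ is $\Sigma_n$. Then I would split into two cases according to whether $A$ is empty.

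In the case $A=\emptyset$, I claim $p\Vdash\forall x\forall y\neg\phi(x,y)$, so that $q=p$ works for the first alternative. If not, then there exist $c,m$ with $p\not\Vdash\neg\phi(c,m)$; applying Lemma \ref{neg_flip} yields some $q\preceq p$ with $q\Vdash\phi(c,m)$, and since $\phi(c,m)$ is a $\Sigma_n$ formula (it is $\Pi_{n-1}$, and the promotion lemma was stated for $\Sigma_{m+1}$ formulas with $m<n$, but the same promotion applies to $\Pi_m$ formulas for $m\leq n$ as noted right after the promotion lemma), we may promote $q$ to some $q'\preceq_{n-1}p$ still forcing $\phi(c,m)$. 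Then $c\in A$, a contradiction.

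In the case $A\neq\emptyset$, apply $I\Sigma_n$ (available since we are working in the ground model $\mathfrak{M}\vDash\RCA+I\Sigma_n$ and $A$ is a $\Sigma_n$ set with parameters) to produce the least $c\in A$. Fix $q_0\preceq_{n-1}p$ and $k$ witnessing $c\in A$, so $q_0\Vdash\phi(c,k)$. It remains to check that for every $c'<c$, $q_0\Vdash\forall y\neg\phi(c',y)$, which reduces to showing $q_0\Vdash\neg\phi(c',k')$ for all such $c',k'$. If this fails for some $c',k'$, Lemma \ref{neg_flip} gives $r\preceq q_0$ with $r\Vdash\phi(c',k')$, and the promotion lemma applied relative to $p$ gives $r'\preceq_{n-1}p$ with $r'\Vdash\phi(c',k')$, contradicting the minimality of $c$. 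Hence $q_0$ satisfies the second alternative. The main subtlety I anticipate is the bookkeeping: making sure that when we invoke the promotion lemma we are promoting relative to $p$ (so that the resulting condition lies in $A$) rather than relative to an intermediate condition, and that the claimed $\Sigma_n$ complexity of $A$ really uses only the internal description of $\preceq_{n-1}p$ together with the $\Pi_{n-1}$ complexity of forcing $\phi$.
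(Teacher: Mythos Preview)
Your proposal is correct and follows essentially the same approach as the paper: both define the set $A=\{c\mid\exists q\preceq_{n-1}p\,\exists k\ q\Vdash\phi(c,k)\}$, verify it is $\Sigma_n$ using the complexity analysis just before the lemma, and split on whether $A$ is empty, using the promotion lemma (for $\Pi_{n-1}$ formulas, giving $q'\preceq_{n-1}p$) together with $I\Sigma_n$ in the ground model. Your version is in fact slightly more explicit than the paper's about invoking promotion relative to $p$ when deriving the contradiction with minimality of $c$, which the paper leaves implicit.
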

\begin{proof}
Let $p$ and $\phi$ be given.  Consider the set of $c$ such that there exists a $q\preceq_{n-1}p$ and a $k$ with $q\Vdash\phi(c,k)$.

Suppose there is no such $c$.  If $q\preceq p$ and $q\Vdash\phi(c,k)$ then, since $\phi(c,k)$ is a $\Pi_{n-1}$ formula, there would be some $q'\preceq_{n-1}p$ such that $q'\Vdash\phi(c,k)$.  So there is no such $q\preceq p$, and therefore $p\Vdash\neg\phi(c,k)$.  Since this holds for every $c,k$, $p\Vdash\forall c\forall k\neg\phi(c,k)$.

Suppose there is such a $c$.  Since the set of $q$ such that $q\Vdash\phi(c,k)$ is given by a $\Pi_{n-1}$ formula, the existence of such a $q,k$ is a $\Sigma_n$ formula, so there must be a least $c$ such that there exist such a $q,k$.  We choose some $q$ and some $k$ so that $q\Vdash\phi(c,k)$, and so $q\Vdash\exists x\phi(c,k)$.  Suppose $q'\preceq q$ and there is a $c'< c$ such that $q'\Vdash\exists y\phi(c',y)$; then there would be an $r\preceq q'$ and a $k$ such that $r\Vdash\phi(c',k)$, contradicting the leastness of $c$.  So there is no such $q'$, and so for every $c'<c$, each $q'\preceq q$ must have $q'\not\Vdash\exists y\phi(c',y)$, so $q\Vdash\forall y\neg\phi(c',y)$.
\end{proof}

\begin{proof}[Proof of Theorem \ref{main_tool_n}]
Again, by Theorem \ref{finish_model}, it suffices to show that there is an $S$ such that $\lim^n S=W$ and for every $\Sigma_n$ formula $\phi(x,G)$ with parameters from $\mathfrak{M}$, the least number principle holds for $\{x\mid \phi(x,S)\}$.  We construct an infinite sequence $p_1\succ p_2\succ\cdots$ such that:
\begin{itemize}
\item If $\exists x\phi(x)$ is a $\Sigma_m$ formula with $m\leq n$ and $p_i\Vdash\exists x\phi(x)$ then there is a $j$ and a $k$ such that $p_j\Vdash\phi(k)$,
\item For every $\Sigma_n$ formula $\exists y\phi(x,y)$, either there is a $j$ with $p_j\Vdash\forall x\forall y\neg\phi(x,y)$ or a $c,k,i$ with $p_i\Vdash\phi(c,k)$,
\item For each $r<n$, $\bigcup_i I^{p_i}_r=|M|$,
\item $\bigcup_i I^{p_i}_n=|M|$ and when $s_n\in I^{p_i}_n$ then $\chi_*^{p_i}(s_n)=\chi_W(s_n)$.
\end{itemize}
Each of these families consists of countably many requirements, so it suffices to show that given any $p$ and any individual requirement, there is a $q\preceq p$ satisfying the additional requirement.  In order to avoid conflict with the fourth part, when satisfying the first three families, we must have $q\preceq_{n-1} p$.

For the first family, observe that if $p\Vdash\exists x\phi(x)$ then there must, by definition, be a $q\preceq p$ and a $k$ with $q\Vdash\phi(k)$, and since $\phi(k)$ is a $\Pi_{m-1}$ formula with $m-1<n$, we may take $q\preceq_{n-1}p$.  The existence of $q\preceq_{n-1}p$ satisfying the second family is given by the preceding lemma.  The existence of $q\preceq_{n-1}p$ satisfying the third family is trivial

For the final condition, we may take any $s_n\not\in I^p_n$ and define $q$ by $I^q_n=I^p_n\cup\{s_n\}$, adding realizations larger than any elements where $\chi^p$ is defined, and setting $\chi^q$ on the resulting sequence equal to $\chi_W(s_n)$.

Suppose we have such a sequence.  We take $S=\{k\mid\exists i\ \nu^{p_i}(k)=1\}$.  We claim that if $\phi(G)$ is a $\Sigma_n$ or $\Pi_n$ formula and there is an $i$ with $p_i\Vdash\phi(G)$ such that $\phi(S)$ holds.  We show this by induction on $\phi$; when $\phi$ is $\Delta_0$, this is immediate from the definition.  Suppose $p_i\Vdash\exists x\phi(x,G)$; by the construction, there is a $j\geq i$ and a $k$ so that $p_j\Vdash\phi(k,G)$, and so by IH, $\phi(k,S)$, and so $\exists x\phi(x,S)$.  If $p_i\Vdash\forall x\phi(x,G)$ then for each $k$, $p_i\Vdash\phi(k,G)$, so by IH, $\phi(k,S)$ holds, and therefore $\forall x\phi(x,S)$ holds.

Together with the second requirement, this implies that the least number principle holds for $\{x\mid\phi(x,S)\}$ whenever $\phi$ is a $\Sigma_n$ formula.

It remains to show that $\lim^nS=W$.  We show by induction on $r$ that $\lim^rS$ exists and for each $\vec s_{[n,r]}$, $\chi_{\lim^r S}(\vec s_{[n,r]})=\chi_*^{p_i}(\vec s_{[n,r]})$ for some (and therefore every) $i$ large enough that $\vec s_{[n,r]}\in I^{p_i}_r$.  When $r=1$, whenever $\vec s_{[n,1]}\in I^{p_i}_1$, we have $\nu^{p_i}(\vec s_{[n,0]})=\chi_*^{p_i}(\vec s_{[n,1]})$ for coboundedly many values of $s_0$.  Since for any $\vec s_{[n,1]}$ there is some $p_i$ with $\vec s_{[n,1]}\in I^{p_i}_1$, the claim follows immediately.

Consider some $r>1$ and some $\vec s_{[n,r]}\in I^{p_i}_r$; for every $s_{r-1}\geq V^{p_i}_r(\vec s_{[n,r]})$, there must be some $j$ such that $\vec s_{[n,r]}\in I^{p_j}_{r-1}$, and therefore by the inductive hypothesis, $\chi_{\lim^{r-1}X}(\vec s_{[n,r-1]})$ exists and is equal to $\chi_*^{p_j}(\vec s_{[n,r-1]})=\chi_*^{p_i}(\vec s_{[n,r]})$.  Since this holds for every $\vec s_{[n,r]}$, and, given some $\vec s_{[n,r]}$, for coboundedly many $s_{r-1}$, the claim follows.

In particular, it follows from the fourth requirement that $\lim^nS$ exists and is equal to $W$.
\end{proof}

\section{Conservation over Fragments of Arithmetic}

The results in the previous section make it easy to prove a number of results about $\Pmax(\RCA+I\Sigma_n)$.

\begin{definition}
  $I\subseteq M$ is a \emph{cut} if $I$ is an initial segment of $M$ closed under successor.
\end{definition}

\begin{theorem}
  The set of G\"odel numbers of formulas in $\Pmax(\RCA+I\Sigma_n)$ is $\Pi_2$ complete.
\end{theorem}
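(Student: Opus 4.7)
My plan is to prove the $\Pi_2$ upper bound by direct unpacking and the lower bound by a many--one reduction from a fixed $\Pi_2$-complete set, with Theorem~\ref{main_tool_n} as the main engine.

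\textbf{Upper bound.} The definition of $\Pmax(T)$ expands directly: $\sigma\in\Pmax(\RCA+I\Sigma_n)$ iff
\[
\forall \pi\,\forall d\,\exists d'\,\Phi(\pi,d,d',\sigma),
\]
where $\pi$ ranges over G\"odel numbers of $\Pi^1_1$ sentences, $d$ ranges over alleged proofs of $\pi$ from $\RCA+I\Sigma_n+\sigma$, $d'$ over proofs of $\pi$ from $\RCA+I\Sigma_n$, and $\Phi$ is primitive recursive. Hence the set is visibly $\Pi_2$ in the G\"odel number of $\sigma$.

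\textbf{Lower bound.} Fix a $\Pi_2$-complete set $A=\{x:\forall y\,\exists z\,R(x,y,z)\}$ with $R$ primitive recursive. I aim to construct, computably in $x$, a $\Pi^1_2$ sentence $\sigma_x$ satisfying
\[
x\in A \iff \sigma_x\in\Pmax(\RCA+I\Sigma_n).
\]
The sentence will have the shape $\sigma_x := \forall u\,\exists S\,\psi_x(u,S)$ with $\psi_x$ an arithmetic formula saying, roughly, that $\lim^n S$ encodes on $[0,u]$ the set of witnesses for $R(x,\cdot,\cdot)$ together with a ``rigidity'' side-condition tying this data to actual first-order witnesses at standard values of $u$.

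\emph{Conservativity direction.} If $x\in A$, then in any countable $\mathfrak{M}\models\RCA+I\Sigma_n$ and for any $u\in|M|$ the intended target set $W_u\subseteq|M|$ is definable over $\mathfrak{M}$, so Theorem~\ref{main_tool_n} adjoins an $S$ with $\lim^nS=W_u$ and preserves $\RCA+I\Sigma_n$. Iterating over a countable enumeration of $|M|$ and closing by a standard union argument together with Theorem~\ref{finish_model} produces an $\omega$-supermodel $\mathfrak{M}'\models\RCA+I\Sigma_n+\sigma_x$; Lemma~\ref{ext_property} then gives $\sigma_x\in\Pmax(\RCA+I\Sigma_n)$.

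\emph{Non-conservativity direction.} If $x\notin A$, fix a standard $y_0$ with $\forall z\,\neg R(x,y_0,z)$; the rigidity in $\psi_x$ is engineered so that $\RCA+I\Sigma_n+\sigma_x$ proves the $\Sigma^0_1$ (hence $\Pi^1_1$) sentence $\exists z\,R(x,y_0,z)$. Since this sentence is false in $\mathbb{N}$, it is not a theorem of $\RCA+I\Sigma_n$, so $\sigma_x$ introduces a genuine new $\Pi^1_1$ consequence and drops out of $\Pmax$.

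\textbf{Expected main obstacle.} The delicate part is designing $\psi_x$ so that it reconciles two opposing demands. At nonstandard $u$, $\psi_x(u,\cdot)$ must be realizable even in $\RCA+I\Sigma_n$-models where the first-order sentence $\forall y\leq u\,\exists z\,R(x,y,z)$ happens to fail in $\mathfrak{M}$ (such models exist even when $x\in A$, since $A(x)$ is only a fact about~$\mathbb{N}$); yet at standard $u=y_0$ the side condition must be strong enough to force the existence of an actual $R$-witness. I expect this to be precisely where the ``arm's length'' character of $\lim^n$-coding developed in Section~3 is essential: because $\psi_x$ controls only $\lim^n S$ rather than $S$ itself, the $\Delta_{n+1}$-level slack allows $S$ to be made compatible with whatever nonstandard first-order behavior $\mathfrak{M}$ happens to exhibit, while the standard-level rigidity remains tight enough to collapse $\sigma_x$ under $\neg A(x)$.
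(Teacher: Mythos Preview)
Your upper bound matches the paper's. The lower bound, however, has a real gap: you never actually specify $\psi_x$, and the obstacle you correctly isolate---that at nonstandard $u$ the model may fail $\forall y\leq u\,\exists z\,R(x,y,z)$ even when $x\in A$---is not resolved by anything in your outline. Saying that the $\Delta_{n+1}$ ``slack'' of $\lim^n$-coding should absorb this is not an argument; the formula $\psi_x(u,S)$ lives at a fixed syntactic level and cannot internally distinguish standard from nonstandard $u$, so any clause strong enough to force $\exists z\,R(x,y_0,z)$ at the standard $y_0$ will bite equally at nonstandard $u$. Your claim that the target $W_u$ is ``definable over $\mathfrak{M}$'' is also suspect for the same reason: if $W_u$ is meant to record witnesses on $[0,u]$, those witnesses need not exist.

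The paper sidesteps all of this with one idea you are missing: rather than quantify over $u$ and try to encode witnesses, it uses a single $\Sigma^1_1$ sentence $\Phi$ asserting that there is an $X$ with $\lim^n X$ a \emph{cut} closed under the Skolem function for $\phi$ (``for every $x\in\lim^n X$ there is $y\in\lim^n X$ with $\phi(x,y)$''). When the $\Pi_2$ sentence is true in $\mathbb{N}$, the standard integers themselves form such a cut; one applies Theorem~\ref{main_tool_n} once with $W=\mathbb{N}$---an \emph{undefinable} subset of $|M|$, which is exactly what the theorem permits---to get the required $\omega$-extension, with no iteration or union argument needed. When the $\Pi_2$ sentence fails at some standard $m$, any cut contains $m$, so $\Phi$ proves the false $\Sigma_1$ sentence $\exists y\,\phi(m,y)$. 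The point of the arm's-length coding is thus not ``slack for nonstandard behavior'' but the ability to smuggle in the genuinely external set $\mathbb{N}$.
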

\begin{proof}
  It is easy to see that $\Pmax(\RCA+I\Sigma_n)$ is $\Pi_2$: $\sigma\in\Pmax(\RCA+I\Sigma_n)$ exactly if $\sigma$ is $\Pi^1_2$ and every for proof of a $\Pi^1_1$ formula in $\RCA+I\Sigma_n+\sigma$, there is a proof of that formula in $\RCA+I\Sigma_n$.

  On the other hand, for any $\Pi_2$ sentence $\forall x\exists y\phi(x,y)$ with $\phi$ a $\Delta_0$ formula, consider the $\Sigma^1_1$ sentence $\Phi$:
  \begin{quote}
There exists a set $X$ such that $\lim^nX$ is a cut and for any $x\in\lim^nX$, there is a $y\in \lim^nX$ such that $\phi(x,y)$ holds.
  \end{quote}

If $\mathbb{N}\vDash\forall x\exists y\phi(x,y)$ then whenever $\mathfrak{M}$ is a countable model of $\RCA+I\Sigma_n$, we may apply Theorem \ref{main_tool_n} to obtain a set extension $\mathfrak{M}'$ such that $\mathbb{N}=\lim^nS$ for some $S\in\mathcal{M}'$.  Therefore $\mathfrak{M}'\vDash\Phi$.  Since this holds for any countable model, by Lemma \ref{ext_property} it follows that $\RCA+I\Sigma_n+\Phi$ is a $\Pi^1_1$ conservative extension of $\RCA+I\Sigma_n$.

On the other hand, if $\mathbb{N}\not\vDash\forall x\exists y\phi(x,y)$, there must be some $n$ such that $\mathbb{N}\vDash\forall y\neg\phi(n,y)$.  However $\RCA+I\Sigma_n+\Phi$ implies $\exists y\phi(n,y)$; since $\mathbb{N}\vDash\RCA+I\Sigma_n$, it follows that $\RCA+I\Sigma_n\not\vdash\exists y\phi(n,y)$, so $\exists y\phi(n,y)$ is an arithmetic formula implied by $\RCA+I\Sigma_n+\Phi$ but not by $\RCA+I\Sigma_n$.
\end{proof}

\begin{remark}\ 
  \begin{itemize}
  \item This immediately answers Question 3.5 of \cite{MR2798907}: $\Pmax(\RCA)$ is not computably axiomatizable.
  \item If $T$ is any theory which is not $\Pi^1_1$ conservative over $\RCA+I\Sigma_n$ then we may take any $\Pi^1_1$ formula $\theta$ with $T\vdash\theta$ but $\RCA+I\Sigma_n\not\vdash\theta$, and we could carry out the argument above using the formulas $\Phi\vee\theta$ (one can manipulate the quantifiers to get this into a prenex $\Pi^1_2$ form).  So the situation is also not simplified in the restriction to sentences provable in $T$.  This answers Question 3.2 of \cite{MR2798907}.
  \item In particular, if $\theta$ is a true $\Pi^1_1$ formula not implied by $\RCA+I\Sigma_n$, the argument could be carried out using sentences $\Phi\vee\theta$, which are true.
  \end{itemize}
\end{remark}

A more complicated use of this method, together with an idea from \cite{MR0409172}, can answer Question 3.3 of \cite{MR2798907}:
\begin{theorem}
  There is a $\Pi^1_2$ theory $T$ which is a $\Pi^1_1$ conservative extension of $\RCA+I\Sigma_n$ and a nonstandard model $\mathfrak{M}$ which is not an $\omega$-submodel of any model of $T$.
\end{theorem}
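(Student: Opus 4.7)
The plan is to build both $T$ and the exceptional $\mathfrak{M}$ by combining iterated applications of Theorem~\ref{main_tool_n} (as in the preceding theorem) with a diagonalization technique along the countably many potential $\omega$-extensions, in the spirit of \cite{MR0409172}. The key leverage is the observation that by Lemma~\ref{ext_property} one only needs each individual sentence to admit an $\omega$-extension over every countable base model; by the compactness remark immediately following Theorem~2.5, $\Pi^1_1$-conservativity is then preserved under countable unions, even though no single $\omega$-extension of a prescribed model need witness all of the union at once.

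Concretely, I would choose a countable family of $\Pi^1_2$ sentences $\sigma_i \equiv \forall Y\,\exists X\,\psi_i(Y,\lim^n X)$, with $\psi_i$ arithmetic and each $\sigma_i$ of the same broad shape as the sentence $\Phi$ from the preceding theorem. For each fixed $i$ and each countable $\mathfrak{N}\vDash\RCA+I\Sigma_n$, conservativity of $\sigma_i$ follows by enumerating the sets that will appear in a chain of $\omega$-extensions and, at each stage, using Theorem~\ref{main_tool_n} to add a new $X$ with $\lim^n X=W_Y$ for some $W_Y\subseteq|N|$ witnessing $\psi_i(Y,W_Y)$. Then $T=\RCA+I\Sigma_n+\{\sigma_i:i\in\omega\}$ is $\Pi^1_1$-conservative by Yokoyama--Ikeda--Tsuboi together with compactness.

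For the nonstandard $\mathfrak{M}=(M,\mathcal{M})$, I would adapt the diagonal construction from \cite{MR0409172} to build $\mathcal{M}$ by stages. Fix a countable nonstandard $M\vDash I\Sigma_n$ with enough structural freedom, and enumerate all countable candidate second-order universes $\mathcal{M}'$ that could be produced by repeated applications of Theorem~\ref{main_tool_n} starting from any $\mathcal{M}\ni$-entries already committed. At stage $j$, pick a pair $(Y_j,i_j)$ and fold $Y_j$ into $\mathcal{M}$ so that $\psi_{i_j}(Y_j,\lim^n X)$ fails for every $X$ appearing in the $j$-th enumerated candidate. By threading the forcing of Theorem~\ref{main_tool_n} together with this diagonal requirement, one maintains the conclusion that the final $(M,\mathcal{M})$ still models $\RCA+I\Sigma_n$ while no $\omega$-extension of $\mathfrak{M}$ can satisfy every $\sigma_i$.

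The main obstacle is the tension between the two sides: each $\sigma_i$ must be satisfiable after an $\omega$-extension of any countable base model, yet the specific $\mathfrak{M}$ must fail to satisfy the conjunction of all of them in any extension. The freedom in Theorem~\ref{main_tool_n} that makes individual conservativity easy (any $W\subseteq|M|$ may be realized as a $\lim^n$) is what makes the diagonal harder, because any single arithmetic obstruction is automatically avoidable. The contribution of the \cite{MR0409172}-style idea is to diagonalize not against subsets of $|M|$ but against the countably many possible \emph{sequences} of witnesses — i.e., against the countably many possible countable $\omega$-extensions themselves — which is what the finer bookkeeping must arrange.
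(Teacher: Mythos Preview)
Your proposal has a genuine gap in the diagonalization step. You want to build $\mathfrak{M}=(M,\mathcal{M})$ so that \emph{no} $\omega$-extension $(M,\mathcal{M}')$ satisfies all of $T$, and you propose to do this by enumerating ``all countable candidate second-order universes $\mathcal{M}'$'' and killing one $\sigma_i$ in each. But once $M$ and $\mathcal{M}$ are fixed, there are continuum-many countable $\mathcal{M}'\supseteq\mathcal{M}$, not countably many; restricting to those ``produced by repeated applications of Theorem~\ref{main_tool_n}'' does not help, both because that class is still uncountable and because an arbitrary $\omega$-extension satisfying $T$ need not arise that way. Moreover, even if you defeat a specific $\mathcal{M}'$ by adding a $Y_j$ with $\psi_{i_j}(Y_j,\lim^n X)$ failing for all $X\in\mathcal{M}'$, a larger $\mathcal{M}''\supseteq\mathcal{M}'$ may well contain a suitable $X$ --- indeed, by the very conservativity you are arranging, such an $X$ always exists in some further $\omega$-extension. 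So the diagonal requirement as stated is unsatisfiable.

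The idea you are missing, and the one the paper takes from \cite{MR0409172}, is that the obstruction must live in the \emph{first-order part} $M$, which is invariant under $\omega$-extension. The paper writes down a single $\Sigma^1_1$ sentence $\Phi$ asserting the existence of $S=(S_0,S_1)$ such that $\lim^n S_0$ codes a cut, $\lim^n S_1$ codes a partial satisfaction relation along that cut, and a saturation-style induction holds for computable types relative to this data. Theorem~\ref{main_tool_n} lets any countable recursively saturated $\mathfrak{M}$ be $\omega$-extended to a model of $\Phi$ (take the cut to be $\mathbb{N}$ and the satisfaction relation to be the true one); since every consistent $\Pi^1_1$ extension of $\RCA+I\Sigma_n$ has a countable recursively saturated model, this already yields $\Pi^1_1$-conservativity without needing the full $\omega$-extension property. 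Conversely, any model of $\Phi$ --- and hence any $\omega$-extension satisfying $\Phi$ --- has recursively saturated first-order part. Thus the bad $\mathfrak{M}$ is simply \emph{any} nonstandard model whose first-order part is not recursively saturated: no construction of $\mathfrak{M}$, no infinite list of $\sigma_i$, and no diagonalization against extensions is needed.
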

\begin{proof}
Recall that a model $M$ is \emph{recursively saturated} if whenever $\langle\lceil\phi_i(x,\vec y)\rceil\rangle$ is a computable sequence of G\"odel numbers of (arithmetic) formulas and $\vec a\in M^{|\vec y|}$ is a tuple such that, for each $n$, $M\vDash\exists x\forall i\leq n\phi_i(x,\vec a)$, then there is an $m\in |M|$ such that $\forall i\, M\vDash \phi_i(m,\vec a)$.  (It does not change the notion if we replace recursive with $\Sigma_1$, an idea which will help us below.)  See, for instance, \cite{MR0403952}, for general properties of recursively saturated models.  We fix, as usual, some uniformly computable enumeration $\varphi_e$ of the partial computable functions.  We write $\varphi_e(m)\downarrow$ to indicate that the computation of $\varphi_e(m)$ converges.

Consider the formula $\mathrm{Sat}(k,X)$ which states that $X$ codes the satisfiability relation for formulas with G\"odel number below $k$.  That is, for $\lceil\psi\rceil<k$, $\langle \lceil\psi\rceil,\vec a\rangle\in X$ iff the usual Tarski conditions hold (if $\psi$ is atomic then only when $\psi(\vec a)$ holds, if $\psi$ is $\exists x\theta$ then only if there is some $m$ such that $\langle \lceil\theta\rceil,\vec a^\frown m\rangle\in X$, and so on).  This is an arithmetic formula.  Write $X\upharpoonright k=\{(\lceil\psi\rceil,\vec a)\in X\mid \lceil\psi\rceil<k\}$.


We will encode a cut, $W_0$, and a satisfaction relation $W_1$ which satisfies $\mathrm{Sat}(k,W_1\upharpoonright k)$ for every $k\in W_0$.  We then wish to consider types relative to this cut: that is, we take an index $e\in W_0$ and view it as an attempting at defining a type by taking $T_e$ to consist of those formulas $\psi$ such that $\varphi_e(m)=\lceil\psi\rceil$ for some $m$.  We can think think of a type as being ``satisfiable'' below some $m$ if there is an $x$ such that whenever $m'<m$ and $\varphi_e(m')\downarrow$ and $\varphi_e(m')\in W_0$, $\langle \varphi_e(m'),x\rangle\in W_1$.  (Our precise definition below is slightly more complicated, since it allows for parameters.)  Recursive saturation is similar to having an induction property for this notion: either there is an $m$ so that the type is satisfiable below $m$ but not $m+1$, or the type is fully satisfiable---there is a single $x$ with $\langle\varphi_e(m),x\rangle\in W_1$ whenever $\varphi_e(m)\downarrow$ and $\varphi_e(m)\in W_0$.

We now make this precise.  We consider the sentence $\Phi$ given by:
\begin{quote}
  There is a set $S=(S_0,S_1)$ such that, taking $W_0=\lim^n S_0$ and $W_1=\lim^n S_1$:
  \begin{enumerate}
  \item $W_0$ is a cut,
  \item For $k\in W_0$, $\mathrm{Sat}(k,W_1\upharpoonright k)$,
  \item \label{cond:key} Let a tuple $\vec a$ and an element $e\in W_0$ be given; then either:
    \begin{itemize}
    \item There is an $m\in W_0$ such that $\varphi_e(m)\downarrow$, $\varphi_e(m)\in W_0$, but $\varphi_e(m)$ is not the G\"odel number of a formula $\psi$ with $|\vec a|+1$ free variables,
    \item There is an $m\in W_0$ and an $x$ such that for every $m'<m$ such that $\varphi_e(m')\downarrow$ and $\varphi_e(m')\in W_0$, $\langle\varphi_e(m'),(x,\vec a)\rangle\in W_1$, but there is no $x'$ such that for every $m'\leq m$ such that $\varphi_e(m')\downarrow$ and $\varphi_e(m')\in W_0$, $\langle\varphi_e(m),(x',\vec a)\rangle\in W_1$, or
    \item There is an $x$ such that for every $m\in W_0$ such that $\varphi_e(m)\downarrow$ and $\varphi_e(m)\in W_0$, $\langle\varphi_e(m),(x,\vec a)\rangle\in W_1$.
    \end{itemize}
  \end{enumerate}
\end{quote}
Let $\mathfrak{M}$ be recursively saturated; then if $W_0=\mathbb{N}$ and $W_1$ is the actual satisfiability relation for $M$, the pair $(S_0,S_1)$ will satisfy $\Phi$.  Therefore by Theorem \ref{main_tool_n}, $\mathfrak{M}$ is an $\omega$-submodel of a model of $\Phi$.

Conversely, suppose $\mathfrak{M}\vDash\Phi$, let $\langle\lceil\phi_i(x,\vec y)\rceil\rangle=\langle\varphi_e(i)\rangle$ be an actual computable sequence of formulas (that is, $e\in\mathbb{N}$ and $\varphi_e$ is total computable) and let $\vec a$ be such that for each $n\in\mathbb{N}$, $M\vDash\exists x\forall i\leq n\phi_i(x,\vec a)$.  The definition of satisfiability is unique for actual formulas, and since $\varphi_e(i)$ converges to an element in $\mathbb{N}\subseteq W_0$ for all $i\in\mathbb{N}$, for each $n\in\mathbb{N}$ there is an $x$ so that for every $i\leq n$, $\langle \varphi_e(i),(x,\vec a)\rangle\in W_1$.  Consider the cases given by condition (\ref{cond:key}); in the second case, let $x$ and $m$ be given so that for every $m'<m$ such that $\varphi_e(m')\downarrow$, $\langle\varphi_e(m'),(x,\vec a)\rangle\in W_1$.  We cannot have $m\in\mathbb{N}$ since the sequence is finitely satisfiable, so $m>\mathbb{N}$.  But then $\langle\varphi_e(n),(x,\vec a)\rangle\in W_1$ for all $n$.

In the third case, it is immediate that $x$ is the desired witness.
\end{proof}
This method could also be used to produce a $\Pi^1_2$ sentence $\Phi'$ so that $\mathfrak{M}$ can be expanded to satisfy $\Phi'$ iff $(M,S_1,\ldots,S_n)$ is recursively saturated for any choice of finitely many $S_1,\ldots,S_n\in\mathcal{M}$.

\section{Conservation over \ACA}

The method we used to show that $\Pmax(\RCA+I\Sigma_n)$ is $\Pi_2$ complete depended on the fact that in theories like $\RCA+I\Sigma_n$ there is a ``mismatch'' between the restrictions on possible sets placed by the axioms and our ability to describe sets using arithmetic formulas---that is, since the axioms all have low quantifier complexity, we were able to encode arbitrary sets by making sure they could only be decoded using formulas more complicated than the axioms.  Over \ACA, the comparable result has been shown by proof theoretic methods \cite{MR744645,MR567671,quinsey_thesis}.  For completeness, and to compare with the work above, we include a proof here.

The following argument from \cite{avigad:collection} provides inspiration:
\begin{theorem}
  There is a $\Sigma^1_1$ sentence false in the standard model which is $\Pi^1_1$ conservative over $\ACA$.
\end{theorem}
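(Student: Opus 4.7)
The plan is to take $\Phi$ to be the $\Sigma^1_1$ sentence expressing the existence of a proper initial self-embedding of the arithmetic structure, namely
\[
\Phi:\ \exists f\,\exists a\,\bigl[f \text{ is an injective homomorphism of } (M, 0, S, +, \cdot, <),\ \forall x\ f(x) < a\bigr].
\]
The matrix is a conjunction of universal arithmetic conditions, so $\Phi$ is $\Sigma^1_1$. In $(\mathbb{N},\mathcal{P}(\mathbb N))$, the induction axiom forces any such $f$ to satisfy $f(n)=n$ (by induction on $f(0)=0$ and $f(n+1)=f(n)+1$), whose range is unbounded, contradicting $\forall x\, f(x)<a$. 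Hence $\Phi$ is false in the standard model.

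For $\Pi^1_1$-conservativity, given a $\Pi^1_1$ sentence $\sigma$ with $\ACA\not\vdash\sigma$, the goal is to produce a model of $\ACA+\Phi+\neg\sigma$. By compactness, augmenting $\ACA+\neg\sigma$ with a fresh constant $c>\bar n$ for each standard $n$, I obtain a countable nonstandard model $\mathfrak{M}=(M,\mathcal{M})\vDash\ACA+\neg\sigma$. Friedman's self-embedding theorem \cite{MR0409172} then supplies an injective initial homomorphism $f\colon M\to M$ with proper bounded range, giving the raw witness for $\Phi$. The plan is then to construct $\mathcal{M}'\supseteq\mathcal{M}$ containing (a representation of) $f$ such that $\mathfrak{M}'=(M,\mathcal{M}')\vDash\ACA$, in which case the $\Sigma^1_1$-witness for $\neg\sigma$ in $\mathcal{M}$ persists in $\mathcal{M}'$.

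The main obstacle is precisely this extension step: $\mathrm{rng}(f)$ is $\Sigma^0_1$-definable from $f$ and is itself a proper cut of $M$, so the naive closure of $\mathcal{M}\cup\{f\}$ under arithmetic comprehension immediately places a proper cut into $\mathcal{M}'$, violating the induction axiom. Avigad's idea, in this setting, is to avoid adding $f$ outright and instead add a generic code for $f$ built from partial approximations, very much in the spirit of the forcing construction in Section~3 of the present paper: conditions carry bounded finite data about $f$, extensions are controlled by a density argument that guarantees $f$ is present ``at arm's length,'' and the generic is chosen to meet dense sets that certify $\Phi$ without ever letting the range of $f$ emerge as an arithmetically defined set. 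The key density lemmas force induction for each arithmetic formula with the generic as a parameter to be equivalent, via the homomorphism property, to an induction instance over $\mathcal{M}$ alone, which already holds in $\mathfrak{M}\vDash\ACA$.

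The hardest step will be the verification that this forcing simultaneously (i) produces a generic witness to $\Phi$, (ii) preserves arithmetic comprehension, and (iii) does not introduce any definable proper cut of $M$—the three requirements interact delicately because the first pushes towards having $f$ be as ``visible'' as possible while the third pushes in the opposite direction. Once this is established, $\mathfrak{M}'\vDash\ACA+\Phi+\neg\sigma$ as required, which yields $\ACA+\Phi\not\vdash\sigma$ and completes the $\Pi^1_1$-conservativity argument.
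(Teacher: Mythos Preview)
Your proposed sentence $\Phi$ is actually refutable in $\ACA$, so $\ACA+\Phi$ is inconsistent and trivially fails to be $\Pi^1_1$ conservative.  The very induction argument you use to show $\Phi$ is false in $(\mathbb{N},\mathcal{P}(\mathbb{N}))$ is available in any model of $\ACA$: if $f$ is a set in the model and a homomorphism for $0$ and $S$, then $f(0)=0$ and $f(x+1)=f(x)+1$, and since $\ACA$ has induction for all arithmetic formulas with set parameters, the formula $f(x)=x$ holds for all $x$, contradicting boundedness.  The Friedman self-embedding is a genuinely external object; it can never be an element of the second-order part of a model of $\ACA$, and no forcing construction can change this, because the $\Sigma^1_1$ witness must literally be a set in $\mathcal{M}'$.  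The ``arm's length'' trick from Section~3 works over $\RCA+I\Sigma_n$ precisely because decoding $\lim^n S$ lies outside the induction scheme; over $\ACA$ there is no arithmetic hiding place.

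The paper's argument (due to Avigad) is entirely different and proof-theoretic rather than model-theoretic.  Using the fixed point lemma and the $\Sigma^1_1$ truth predicate available in $\ACA$, one builds a sentence $\sigma$ asserting ``my negation is provable in $\ACA$ from a true $\Sigma^1_1$ sentence.''  If $\ACA+\sigma\vdash\forall X\,\neg\phi(X)$, then $\ACA+\exists X\,\phi(X)\vdash\neg\sigma$; but working inside $\ACA+\exists X\,\phi(X)$ this very fact makes $\sigma$ true, yielding a contradiction and hence $\ACA\vdash\forall X\,\neg\phi(X)$.  Falsity in the standard model follows because truth of $\sigma$ would make $\neg\sigma$ actually provable.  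There is no model construction at all.
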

\begin{proof}
  A sentence $\sigma$, obtained using the fixed point lemma, is:
\[    \exists \phi\exists X \left[\phi(X)\wedge\ACA+\exists X\phi(X)\vdash\neg\sigma)\right].\]
That is, $\sigma$ says:
\begin{quote}
  My negation is provable in $\ACA$ from a true $\Sigma^1_1$ sentence.
\end{quote}
(We use here the fact that in \ACA{} there is a $\Sigma^1_1$ truth predicate for $\Sigma^1_1$ sentences.)

This sentence is clearly false in the standard model, since if it were true, its negation would actually be provable, leading to a contradiction.

To see that $\sigma$ is $\Pi^1_1$ conservative, suppose $\ACA+\sigma\vdash\forall X\neg\phi(X)$.  Then $\ACA+\exists X\phi(X)\vdash\neg\sigma$.  But, working in $\ACA+\exists X\phi(X)$, we see that $\exists X\phi(X)$ is true and that $\ACA+\exists X\phi(X)\vdash\neg\sigma$, so $\ACA+\exists X\phi(X)\vdash\sigma$ as well.  This is a contradiction, so $\ACA\vdash\forall X\neg\phi(X)$, as desired.
\end{proof}

\begin{theorem}
  The set of G\"odel numbers of formulas in $\Pmax(\ACA)$ is $\Pi_2$ complete.
\end{theorem}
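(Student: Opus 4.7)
The plan is to reduce truth of arbitrary $\Pi_2$ sentences to membership in $\Pmax(\ACA)$, paralleling the hard direction of the corresponding result for $\Pmax(\RCA+I\Sigma_n)$ but relying on self-reference in the style of the previous theorem rather than on Theorem~\ref{main_tool_n}. The easy direction---that $\Pmax(\ACA)$ is $\Pi_2$---is the same as before. Given $\pi = \forall x\,\exists y\,\phi(x,y)$ with $\phi$ a $\Delta_0$ formula, I would apply the fixed point lemma to define a $\Sigma^1_1$ sentence $\sigma_\pi$ satisfying
\[
\sigma_\pi \;\iff\; \exists \psi\,\exists X\,\exists p\,\bigl[\psi(X)\,\wedge\,\mathrm{Proof}\bigl(p,\lceil\neg\sigma_\pi\rceil,\ACA+\exists Y\,\psi(Y)\bigr)\,\wedge\,\forall x\leq p\,\exists y\,\phi(x,y)\bigr],
\]
so that $\sigma_\pi$ says ``some true $\Sigma^1_1$ sentence $\exists Y\,\psi(Y)$, together with $\ACA$, proves $\neg\sigma_\pi$ in $p$ steps, and $\pi$ is witnessed for all $x\leq p$.''

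Assuming $\pi$ is true, the conservativity of $\sigma_\pi$ would follow by an adaptation of the Avigad argument from the preceding theorem. Suppose $\ACA+\sigma_\pi\vdash\theta$ for some $\Pi^1_1$ sentence $\theta\equiv\forall X\,\neg\chi(X)$; then $\ACA+\exists X\,\chi(X)\vdash\neg\sigma_\pi$ via a specific proof of standard length $p_0$. Since $\pi$ is true, each sentence $\exists y\,\phi(k,y)$ for $k\leq p_0$ is a true $\Sigma_1$ statement provable in $\ACA$, so $\ACA\vdash\forall x\leq p_0\,\exists y\,\phi(x,y)$ as a finite conjunction. Working inside $\ACA+\exists X\,\chi(X)$, all three clauses of the fixed point are witnessed (by $\chi$, the picked $X$, and $p_0$), forcing $\sigma_\pi$ to hold; contradicting the derivation of $\neg\sigma_\pi$ shows $\ACA+\exists X\,\chi(X)$ inconsistent, whence $\ACA\vdash\theta$.

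The main obstacle is the reverse direction: when $\pi$ is false, we must exhibit a $\Pi^1_1$ consequence of $\ACA+\sigma_\pi$ that is not provable in $\ACA$. For a false $\pi$ with least counterexample $n_0$, every witness to $\sigma_\pi$ must have $p<n_0$, but this bound alone does not yield a new $\Pi^1_1$ theorem, and indeed without further modification $\sigma_\pi$ could remain $\Pi^1_1$-conservative vacuously, just as Avigad's original $\sigma$ is conservative despite being false. I expect the fix to be enriching the fixed point with a Rosser-style minimization clause that forces $\sigma_\pi$ itself to certify each $\exists y\,\phi(n,y)$---for instance by demanding that every shorter $\Sigma^1_1$ witness for $\neg \sigma_\pi$ fail---so that $\ACA+\sigma_\pi$ directly proves the false $\Sigma_1$ sentence $\exists y\,\phi(n_0,y)$, while the added clause is tuned so as not to disrupt the conservativity argument above. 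This layering of a Rosser construction on top of Avigad-style self-reference is the standard template for such fragility-vs-robustness trade-offs, and will be the delicate step.
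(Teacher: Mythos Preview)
Your overall strategy---self-reference over \ACA{} in the style of Avigad---matches the paper, and your conservativity argument for the case where $\pi$ is true is essentially identical to the paper's. The gap is exactly where you locate it: with your $\sigma_\pi$, the false-$\pi$ case does not go through, and the Rosser-style patch you gesture at is not what the paper does and is not obviously workable.

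The paper's fix is a different organization of the same ingredients. Rather than asserting $\exists p\bigl(\text{proof}(p)\wedge\forall x\le p\,\exists y\,\phi(x,y)\bigr)$, the paper's fixed point asserts
\[
\exists\phi\,\exists X\Bigl[\phi(X)\wedge\forall y\bigl(\exists z\,\psi(y,z)\ \vee\ \exists p\le y\ \ACA+\exists X\phi(X)\vdash_p\neg\sigma_\psi\bigr)\Bigr].
\]
The point is that the universal quantifier over $y$ is \emph{outside}, with a disjunction at each $y$. When $\pi$ is true the first disjunct always fires and the conservativity argument runs exactly as you wrote. When $\pi$ fails and $n$ is the least counterexample, the disjunction at $y=n$ forces the second disjunct, i.e.\ a proof of $\neg\sigma_\psi$ of standard code $\le n$ from $\ACA+\exists X\phi(X)$ for the \emph{same} $\phi$ witnessing $\sigma_\psi$. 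Since there are only finitely many such proofs (and hence finitely many possible $\phi_i$), one argues that $\ACA+\sigma_\psi+\forall z\,\neg\psi(n,z)$ proves $\bigvee_i\exists X\,\phi_i(X)$ and therefore $\neg\sigma_\psi$, a contradiction; so $\ACA+\sigma_\psi\vdash\exists z\,\psi(n,z)$, a false $\Sigma_1$ sentence. The key structural idea you are missing is this inversion: bounding the proof by the current $y$ rather than bounding the witnesses by the proof. No Rosser minimization is needed.
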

\begin{proof}
We write $T\vdash_p\phi$ for the statement that $p$ is the G\"odel number of a proof of $\phi$ from the set of formulas $T$.

  Given a quantifier free formula $\psi$, consider the following sentence $\sigma_\psi$ obtained using the fixed point lemma:
\[    \exists \phi\exists X \left[\phi(X)\wedge\forall y(\exists z\psi(y,z)\vee\exists p\leq y\ \ACA+\exists X\phi(X)\vdash_p\neg\sigma_\psi)\right].\]
This sentence says:
\begin{quote}
Either $\forall y\exists z\psi(y,z)$ or the first counterexample gives a bound for a proof of my negation from $\ACA$ plus a true $\Sigma^1_1$ sentence.
\end{quote}

Suppose $\mathbb{N}\vDash\forall y\exists z\psi(y,z)$ and that
\[\ACA+\sigma_\psi\vdash\forall X\neg\phi(X).\]
Then also
\[\ACA+\exists X\phi(X)\vdash\neg\sigma_\psi.\]
In particular, there is a natural number $p$ which is the G\"odel number of this proof, and so, together with the finitely many witnesses that when $y<p$ there is a $z$ with $\psi(y,z)$, we obtain a finite number witnessing that
\[\forall y(\exists z\psi(y,z)\vee\exists p\leq y \ACA+\exists X\phi(X)\vdash_p\neg\sigma_\psi)\]
and therefore $\ACA$ proves this.  But then $\ACA+\exists X\phi(X)\vdash\sigma_\psi$.  Since $\ACA+\exists X\phi(X)\vdash\sigma_\psi\wedge\neg\sigma_\psi$, we have
\[\ACA\vdash\forall X\neg\phi(X),\]
showing that $\ACA+\sigma_\psi$ is $\Pi^1_1$ conservative over \ACA.

On the other hand, suppose $\mathbb{N}\vDash\exists y\forall z\neg\psi(y,z)$.  Fix the least $n$ such that $\mathbb{N}\vDash\forall z\neg\psi(n,z)$.  If there is no $p\leq n$ such that $\ACA+\exists X\phi(X)\vdash_p\neg\sigma_\psi$ for some $\phi$ then $\ACA+\sigma_\psi\vdash\exists z\psi(n,z)$, an arithmetic statement which is false, and so not a consequence of \ACA.  Suppose there is such a $p$; there are finitely many $p_1,\ldots,p_k$ below $n$ such that for each $i$ there is a $\phi_i$ so that $\ACA+\exists X\phi_i(X)\vdash_{p_i}\neg\sigma_\psi$.  Therefore $\ACA+\sigma_\psi+\forall z\neg\psi(n,z)$ must prove $\exists X\phi_1(X)\vee\cdots\exists X\phi_k(X)$.  It follows that $\ACA+\sigma_\psi+\forall z\neg\psi(n,z)\vdash\neg\sigma_\psi$.  This is a contradiction, so again $\ACA+\sigma_\psi\vdash\exists z\psi(n,z)$.
\end{proof}
The same argument applies to any computable axiomatized extension of \ACA.

\section{Conclusion}

We do not see any way to adapt the latter proof to the case of $\RCA+I\Sigma_n$, nor to adapt proof for $\RCA+I\Sigma_n$ to the case of $\ACA$.  It would be interesting to know if there is a single proof which can apply to both cases.

We have not considered other fragments of $\ACA$ which have been studied in the literature.  The most important other family of fragments are those given by the bounded collection scheme, $B\Sigma_n$.
\begin{question}
  Is $\Pmax(\RCA+B\Sigma_n)$ $\Pi_2$ complete?
\end{question}
The same question could be asked of other natural families of fragments of arithmetic, so we also wonder what the limits to this result are.
\begin{question}
  Is there a computably axiomatized fragment $T$ of Peano arithmetic so that $\Pmax(\RCA+T)$ is computable?  Is there such a fragment with $\Pmax(\RCA+T)$ computably enumerable?  Is there such a fragment where the axioms have bounded quantifier complexity?
\end{question}

Over $\ACA$ there is a known theory of $\Pi^1_3$, $\Pi^1_2$ conservative extensions (see \cite{simpson99}), some of it developed using techniques which inspired ideas in this paper (especially \cite{MR0409172,MR0403952}).  Analogously to $\Pmax$, there are largest $\Pi^1_{n+1}$, $\Pi^1_n$ conservative theories for all $n\geq 1$, and these theories are, by the same argument as for $n=1$, $\Pi_2$.
\begin{question}
  Is the largest $\Pi^1_{n+1}$, $\Pi^1_n$ conservative extension of $T$ $\Pi_2$ complete, for $n>2$ and $T$ either $\RCA+I\Sigma_m$ or $\ACA$?
\end{question}

\bibliographystyle{plain}
\bibliography{../../Bibliographies/main}
\end{document}